\documentclass{article}

\usepackage[utf8]{inputenc}
\usepackage[T1]{fontenc}
\usepackage{amsmath}
\usepackage{amssymb}
\usepackage{amsthm}
\usepackage[english]{babel}

\usepackage{authblk}

\allowdisplaybreaks[3]

\usepackage[left=34mm,right=34mm,a4paper]{geometry}
\setlength{\textwidth}{142mm}
\setlength{\textheight}{ 229mm }

\newtheorem{theorem}{Theorem}[section]

\newtheorem{lemma}[theorem]{Lemma}
\newtheorem{proposition}[theorem]{Proposition}
\newtheorem{example}[theorem]{Example}
\theoremstyle{definition}
\newtheorem{definition}[theorem]{Definition}
\theoremstyle{remark}
\newtheorem{remark}[theorem]{Remark}
\numberwithin{equation}{section}

\newcommand{\bracket}[1]{\left[#1\right]}
\newcommand{\para}[1]{\left(#1\right)}
\newcommand{\K}{\mathbb{K}}

\title{On $n$-ary Generalization of BiHom-Lie Algebras and BiHom-Associative Algebras}

\begin{document}

\author[1]{Abdennour Kitouni \thanks{abdennour.kitouni@gmail.com}}
\author[2]{Abdenacer Makhlouf \thanks{abdenacer.makhlouf@uha.fr}}
\author[3]{Sergei Silvestrov \thanks{sergei.silvestrov@mdh.se}}

\affil[1]{Université des Frères Mentouri - Constantine BP, 325 Route de Ain El Bey, 25017, Constantine, Algeria.}
\affil[2]{Université de Haute-Alsace, 4 rue des Frères Lumière, 68093 Mulhouse, France}
\affil[3]{Division of Applied Mathematics,
School of Education, Culture and Communication,
 M\"{a}lardalen University, Box 883, 72123 V{\"a}ster{\aa}s, Sweden.}

\date{}
\maketitle

\begin{abstract}
The aim of this paper is to introduce $n$-ary BiHom-algebras, generalizing BiHom-algebras. We  introduce an alternative concept of BiHom-Lie algebra called BiHom-Lie-Leibniz algebra and study  various type of  $n$-ary BiHom-Lie algebras and BiHom-associative algebras. We show that $n$-ary BiHom-Lie-Leibniz algebra can be represented by BiHom-Lie-Leibniz algebra through fundamental objets. Moreover, we provide some key constructions and study $n$-ary BiHom-Lie algebras induced by $(n-1)$-ary BiHom-Lie algebras.
\end{abstract}
\begin{small}
{{\bf Keywords:} BiHom-algebra, n-ary BiHom-Lie algebra, n-ary BiHom-Lie-Leibniz algebra, BiHom-associative algebra }
\\
{{\bf 2010 Mathematics Subject Classification:}} 17A40,17A42,17B30,17B55.
\end{small}

\section*{Introduction}
Hom-algebras first appeared in \cite{HLS} while investigating deformations of Witt and Virasoro algebras. authors discovered algebras similar to Lie algebras in which Jacobi identity is twisted by a linear map, such algebras were called Hom-Lie algebras. In \cite{ms:homstructure} this idea is  applied to generalize other algebras and the resulting algebras are studied. In \cite{LS:quasi-hom-lie}, authors introduced  further generalizations to include color Lie algebras and Lie superalgebras.

Ternary algebras and more generally $n$-ary Lie algebras first appeared in Nambu's generalization of Hamiltonian mechanics, using a ternary generalization of Poisson algebras. The mathematical formulation of Nambu mechanics is due to Takhtajan. Filippov, in \cite{Filippov:nLie} introduced $n$-Lie algebras then Kasymov \cite{Kasymov:nLie} deeper investigated their properties. This approach uses the interpretation of Jacobi identity expressing the fact that  the adjoint map is a derivation. There is also another type of $n$-ary Lie algebras, in which the $n$-ary Jacobi identity is the sum over $S_{2n-1}$ instead of $S_3$ in the binary case. One reason for studying such algebras was that $n$-Lie algebras introduced by Filippov were mostly rigid, and these algebras offered more possibilities to this regard.

Hom-type generalization of $n$-ary algebras were introduced in \cite{AtMaSi:GenNambuAlg}, by twisting the identities defining them using a set of $n-1$ linear maps, together with the particular case where all these maps are equal and are algebra morphisms. A way to generate examples of such algebras from non Hom-algebras of the same type is introduced. In \cite{YauHomNambuLie} and \cite{YauGenCom}, the author investigates some properties of $n$-ary Hom-algebras.

In \cite{MHomHopf}, authors looked at Hom-algebras from a category theoretical point of view, constructing a category on which algebras would be Hom-algebras. A generalization of this approach led to the discovery of BiHom-algebras in \cite{Bihom1}, called BiHom-algebras because the defining identities are twisted by two morphisms instead of only one for Hom-algebras.

The aim of this work is to introduce $n$-ary generalizations of BiHom-algebras and study their basic properties. Namely, we introduce two types of $n$-BiHom-Lie algebras, each one of them conserving a part of the properties of $n$-Lie algebras. We also define  totally BiHom-associative and partially BiHom-associative algebras.

The outline of this paper will be as follows: In the first section we present BiHom-Lie algebras as in \cite{Bihom1} and add an alternative definition which is not equivalent but reduces to the same definition in the case of Hom-Lie algebras. We also define two types of $n$-BiHom-Lie algebras and present their properties.
In the second section, we introduce $n$-ary totally BiHom-associative and partially BiHom-associative algebras and generalize to these cases the Yau twisting.
In the third section, we extend the construction of $(n+1)$-Lie algebras induced by $n$-Lie algebras (see \cite{Abramov} \cite{ams:ternary}, \cite{ams:n}, \cite{akms:ternary},\cite{km:n-ary},\cite{kms:n-hom})  to the case of $n$-ary BiHom-Lie algebras. Then we look at the conditions under which this construction is possible, in terms of image and kernel of the generalized trace map and the twisting maps. 

\section{BiHom-Lie algebras and $n$-BiHom-Lie algebras}
BiHom-algebras are a generalization of Hom-algebras regarding their construction using categories, which was introduced by Caenepeel and Goyvaerts in \cite{MHomHopf}. All the considered vector spaces are over a field of characteristic $0$. Everywhere hereafter, the notation $\widehat{x_i}$ in the arguments of an $n$-linear map means that $x_i$ is excluded, for example, we write $f\para{x_1,...,\widehat{x_i},...,x_n}$ for $f\para{x_1,...,x_{i-1},x_{i+1},...,x_n}$.
\begin{definition}[\cite{Bihom1}] 
A BiHom-Lie algebra is a vector space $A$ together with a bilinear map $\bracket{ \cdot,\cdot} : A^2 \to A$ and two linear maps $\alpha,\beta : A \to A$ satisfying the following conditions:
\begin{enumerate}
\item $\alpha \circ \beta = \beta \circ \alpha$.
\item $\forall x,y \in A, \alpha\para{\bracket{x,y}} = \bracket{\alpha(x),\alpha(y)}$ and $\beta\para{\bracket{x,y}} = \bracket{\beta(x),\beta(y)}$.
\item BiHom-skewsymmetry:  \[\forall x,y \in A, \bracket{\beta(x),\alpha(y)} = - \bracket{\beta(y),\alpha(x)}.\]
\item BiHom-Jacobi identity: \[\forall x,y,z \in A, \underset{x,y,z}{\circlearrowleft}\bracket{\beta^2(x),\bracket{\beta(y),\alpha(z)}} = 0.\]
\end{enumerate}
\end{definition}
One can also define a BiHom-Lie algebra using a generalization of the Jacobi identity under the form stating that the adjoint maps are derivations, we consider the following definition:
\begin{definition} 
A BiHom-Lie-Leibniz  algebra is a vector space $A$ together with a bilinear map $\bracket{ \cdot,\cdot} : A^2 \to A$ and two linear maps $\alpha,\beta : A \to A$ satisfying the following conditions:
\begin{enumerate}
\item $\alpha \circ \beta = \beta \circ \alpha$.
\item $\forall x,y \in A, \alpha\para{\bracket{x,y}} = \bracket{\alpha(x),\alpha(y)}$ and $\beta\para{\bracket{x,y}} = \bracket{\beta(x),\beta(y)}$.
\item BiHom-skewsymmetry  \[\forall x,y \in A, \bracket{\beta(x),\alpha(y)} = - \bracket{\beta(y),\alpha(x)}.\]
\item BiHom-Leibniz identity: \[\forall x,y,z \in A, \bracket{\beta^2(x),\bracket{\beta(y),\alpha(z)}} = \bracket{\bracket{\beta(x),\alpha(y)},\beta^2(z)} + \bracket{\beta^2(y),\bracket{\beta(x),\alpha(z)}}.\]
\end{enumerate}
\end{definition}

\begin{example}
We present here  few examples of BiHom-Lie-Leibniz  algebras. These examples were generated using a computer algebra software. Let $A$ be a $3$-dimensional vector space with basis $(e_1,e_2,e_3)$, the following brackets and matrices, defining linear maps in the considered basis, define on $A$ a BiHom-Lie-Leibniz  algebra structure:
\begin{enumerate}
\item
 $[\alpha]=\begin{pmatrix}0 & 0 & 0 \\ 0 & 1 & 0\\ 0 & 0 & a\end{pmatrix}$ ;
 $[\beta]= \begin{pmatrix}b & 0 & 0 \\ 0 & 0 & 0\\ 0 & 0 & 1\end{pmatrix}$ ;
 $[e_2,e_2]=c_1 e_2$ ; $[e_3,e_1]=c_2 e_1$ ; $[e_i,e_j]=0$  if  $(i,j)\neq (2,2),(3,1)$, where $a,b,c_1,c_2 \in\K$.
\item $[\alpha]=\begin{pmatrix}0 & 0 & 0 \\ 0 & a & 0\\ 0 & 0 & a^2\end{pmatrix}$ ;
 $[\beta]= \begin{pmatrix}b & 0 & 0 \\ 0 & 0 & 0\\ 0 & 0 & 0\end{pmatrix}$ ;
 $[e_2,e_2]=c e_3$ ;  $[e_i,e_j]=0$  if  $(i,j)\neq (2,2)$. Where $a,b,c \in\K$.
\item $[\alpha]=\begin{pmatrix}0 & 0 & 0 \\ 0 & a & 0\\ 0 & 0 & 1\end{pmatrix}$ ;
 $[\beta]= \begin{pmatrix}0 & 0 & 0 \\ 0 & b & 0\\ 0 & 0 & 1\end{pmatrix}$ ;
 $[e_1,e_1]=c_1 e_1$ ; $[e_1,e_2]=c_2 e_1$ ; $[e_1,e_3]=c_3 e_1$; $[e_2,e_1]=c_4 e_1$; $[e_3,e_1]=c_5 e_1$ ; $[e_i,e_j]=0$  for the remaining pairs $(i,j)$, where $a,b,c_k   \in\K ; (1 \leq k \leq 5)$.
\end{enumerate}
\end{example}

\begin{remark} 
The preceding definitions are not equivalent, and give two different classes of algebras. Unlike Lie algebras, such algebras are not skewsymmetric, and thus BiHom-Jacobi and BiHom-Leibniz identities are not equivalent. However, both of them reduce to Lie algebras when the considered morphisms $\alpha$ and $\beta$ are equal to the identity map. Also, if $\alpha$ and $\beta$ are equal and surjective, algebras given by both definitions are multiplicative Hom-Lie algebras.
\end{remark}

\begin{remark} 
 BiHom-Jacobi and BiHom-Leibniz identities are  equivalent if one assumes that the bracket is in addition skewsymmetric in the usual sense. \\ Notice that BiHom-skewsymmetry is equivalent in this case to $[\beta(x),\alpha(y)]=[\alpha(x),\beta(y)]$.
\end{remark}

Generalizing $n$-Lie algebras following the same process gives a class of algebra which does not reduce to BiHom-Lie algebras for $n=2$. We use instead a different form for the fundamental identity to construct our generalization, namely:

\begin{definition} 
An $n$-BiHom-Lie algebra is a vector space $A$, equipped with an $n$-linear operation $\bracket{\cdot,...,\cdot}$ and two linear maps $\alpha$ and $\beta$ satisfying the following conditions:
\begin{enumerate}
\item $\alpha \circ \beta = \beta \circ \alpha$.
\item $\forall x_1,...,x_n \in A, \alpha\para{\bracket{x_1,...,x_n}} = \bracket{\alpha(x_1),...,\alpha(x_n)}$ and $\beta\para{\bracket{x_1,...,x_n}} = \bracket{\beta(x_1),...,\beta(x_n)}$.
\item BiHom-skewsymmetry: $\forall x_1,...,x_n \in A,\forall \sigma \in S_n,$
\[
\bracket{\beta(x_1),...,\beta(x_{n-1}),\alpha(x_n)} = Sgn(\sigma) \bracket{\beta(x_{\sigma(1)}),...,\beta(x_{\sigma(n-1)}),\alpha(x_{\sigma(n)})}.
\]
\item $n$-BiHom-Jacobi identity: $\forall x_1,..,x_{n-1},y_1,...,y_n \in A$,
\begin{align*}
&\bracket{\beta^2(x_1),...,\beta^2(x_{n-1}),\bracket{\beta(y_1),...,\beta(y_{n-1}),\alpha(y_n)}}\\
& =\sum_{k=1}^n (-1)^{n-k} \bracket{\beta^2(y_1),...,\widehat{\beta^2(y_k)},...,\beta^2(y_n),\bracket{\beta(x_1),...,\beta(x_{n-1}),\alpha(y_k)}}.
\end{align*}
\end{enumerate}
\end{definition}

The straightforward generalization of $n$-Lie algebras leads to a class of $n$-ary algebras which reduces to BiHom-Lie-Leibniz  algebras for $n=2$, we will call them $n$-BiHom-Lie-Leibniz  algebras, and define them by:

\begin{definition}
A $n$-BiHom-Lie-Leibniz  algebra is a vector space $A$, equipped with an $n$-linear operation $\bracket{\cdot,...,\cdot}$ and two linear maps $\alpha$ and $\beta$ satisfying the following conditions:
\begin{enumerate}
\item $\alpha \circ \beta = \beta \circ \alpha$.
\item $\forall x_1,...,x_n \in A, \alpha\para{\bracket{x_1,...,x_n}} = \bracket{\alpha(x_1),...,\alpha(x_n)}$ and $\beta\para{\bracket{x_1,...,x_n}} = \bracket{\beta(x_1),...,\beta(x_n)}$.
\item BiHom-skewsymmetry : $\forall x_1,...,x_n \in A,\forall \sigma \in S_n,$
 \[ \bracket{\beta(x_1),...,\beta(x_{n-1}),\alpha(x_n)}
= Sgn(\sigma) \bracket{\beta(x_{\sigma(1)}),...,\beta(x_{\sigma(n-1)}),\alpha(x_{\sigma(n)})}.
\]
\item BiHom-Nambu identity: $\forall x_1,..,x_{n-1},y_1,...,y_n \in A,$
\begin{align*}
 &\bracket{\beta^2(x_1),...,\beta^2(x_{n-1}),\bracket{\beta(y_1),...,\beta(y_{n-1}),\alpha(y_n)}} \\ &=\sum_{k=1}^n \bracket{\beta^2(y_1),...,\bracket{\beta(x_1),...,\beta(x_{n-1}),\alpha(y_k)},...,\beta^2(y_n)}.
 \end{align*}
\end{enumerate}
\end{definition}

Now, we introduce the notions of morphisms, subalgebras and ideals of such algebras. After that we will extend some properties of $n$-Lie algebras and $n$-Hom-Lie algebras to this case, some of these properties hold only for one of the classes of algebras defined above.

Let $(A,\bracket{\cdot,...,\cdot},\alpha,\beta)$ and $(A,\bracket{\cdot,...,\cdot},\alpha',\beta')$ be $n$-BiHom-Lie algebras (resp. Leibniz)

\begin{definition} 
A linear map $f : A \to B$ is said to be an $n$-BiHom-Lie algebra morphism if it satisfies the following properties:
\begin{itemize}
\item $ \alpha'\circ f = f \circ \alpha$ and $\beta' \circ f = f \circ \beta$,
\item $\forall x_1,...,x_n \in A, f\para{\bracket{x_1,...,x_n}} = \bracket{f(x_1),...,f(x_n)}$.
\end{itemize}
\end{definition}

\begin{definition} 
A subset $S \subseteq A$ is a subalgebra if $\alpha(S)\subseteq S$, $\beta(S)\subseteq S$ and $\forall x_1,...,x_n \in S, \bracket{x_1,...,x_n} \in S$. It is said to be an ideal if $\alpha(S)\subseteq S$, $\beta(S)\subseteq S$ and $\forall x_1,...,x_{n-1} \in A, s \in S, \bracket{x_1,...,x_{n-1},s} \in S$.
\end{definition}

\subsection{Fundamental objects and Basic algebra}
In $n$-Lie algebras, the adjoint maps, and more generally actions and representations of an $n$-Lie algebra are defined by giving $n-1$ elements of the algebra. This leads to the notion of fundamental objects and basic Lie algebra \cite{Dal_Takh}. We generalize these constructions to a $n$-BiHom-Lie-Leibniz  algebra, under some conditions on the linear maps $\alpha$ and $\beta$ of this algebra.

Let $\para{A,\bracket{\cdot,...,\cdot},\alpha,\beta}$ be a $n$-BiHom-Lie-Leibniz  algebra such that $\alpha$ is bijective and $\beta$ is surjective. Notice that under these conditions, the algebra's bracket becomes skewsymmetric in its $(n-1)$ first arguments.
\begin{definition}
Fundamental object of the BiHom-algebra $\para{A,\bracket{\cdot,...,\cdot},\alpha,\beta}$ are elements of the $(n-1)$-th exterior power of $A$, that is $\wedge^{n-1}A$.

We also define, for all $X = x_1\wedge ... \wedge x_{n-1}$, $Y = y_1\wedge ... \wedge y_{n-1}$ in $\wedge^{n-1}A$ and $z\in A$ the following operations:
\begin{itemize}
\item The action of fundamental objects on $A$:
\[ X \cdot z = ad_X(z) = \bracket{x_1,...,x_{n-1},z}. \]
\item The multiplication of fundamental objects:
\[ X \cdot Y =\bracket{X,Y} = \sum_{i=1}^{n-1} \beta^2\circ \alpha^{-1}(y_1)\wedge ... \wedge \bracket{x_1,...,x_{n-1},y_i }\wedge...\wedge \beta^2\circ \alpha^{-1}(y_{n-1}). \]
\item The linear maps $\bar{\alpha},\bar{\beta} : \wedge^{n-1}A \to \wedge^{n-1}A$:
\[\bar{\alpha}(X) = \alpha(x_1)\wedge...\wedge \alpha(x_{n-1}) \text{ and } \bar{\beta}(X) = \beta(x_1)\wedge...\wedge \beta(x_{n-1}).\]
\end{itemize}
We extend the preceding definitions to the whole set of fundamental objects by linearity.
\end{definition}

The definition above of the multiplication of fundamental objects may seem unnatural, the motivation behind it is that using this definition, one can write the BiHom-Nambu identity in the following form:
\[ \bar{\beta}^2(X)\cdot\para{\bar{\beta}(Y) \cdot \alpha(z)} = \para{\bar{\beta}(X)\cdot \bar{\alpha}(Y)} \cdot \beta^2(z) + \bar{\beta}^2(Y) \cdot \para{\bar{\beta}(X) \cdot \alpha(z)}, \forall X, Y \in \wedge^{n-1}A, \forall z \in A. \]
The multiplication of fundamental objects satisfies the following property, generalizing a similar property in $n$-Lie algebras case:
\begin{theorem}
Let $\para{A,\bracket{\cdot,...,\cdot},\alpha,\beta}$ be a $n$-BiHom-Lie-Leibniz  algebra such that $\alpha$ is bijective and $\beta$ is surjective. The set of fundamental objects, which we denote by $L(A)$, equipped with the multiplication of fundamental objects and the maps $\bar{\alpha}$ and $\bar{\beta}$ defined above, is a BiHom-Lie-Leibniz  algebra.
\end{theorem}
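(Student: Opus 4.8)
The plan is to verify, for the triple $(L(A),\cdot\,,\bar{\alpha},\bar{\beta})$, the four defining conditions of a BiHom-Lie-Leibniz algebra, working with decomposable fundamental objects $X=x_1\wedge\cdots\wedge x_{n-1}$, $Y$, $Z$ and extending by (bi)linearity. One first notes that, under the standing hypotheses, the operations are well defined on $\wedge^{n-1}A$ (the bracket being skewsymmetric in its first $n-1$ slots makes $ad_X$ depend only on $X$, and the product is of derivation type in its second argument). Three facts about $A$ will be used throughout: that $\alpha\beta=\beta\alpha$, whence $\beta$ commutes with $\alpha^{-1}$ and hence $\alpha\circ\beta^{2}\alpha^{-1}=\beta^{2}$ and $\beta\circ\beta^{2}\alpha^{-1}=\beta^{2}\alpha^{-1}\circ\beta$; that $\alpha,\beta$ (so also $\alpha^{-1}$ and $\beta^{2}\alpha^{-1}$) are morphisms of the $n$-ary bracket; and that, $\alpha$ being bijective and $\beta$ surjective, the $n$-ary bracket is skewsymmetric in its first $n-1$ arguments. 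Conditions (1) and (2) are immediate: $\bar{\alpha}\bar{\beta}=\bar{\beta}\bar{\alpha}$ follows slotwise from $\alpha\beta=\beta\alpha$, and applying $\bar{\alpha}$ to $X\cdot Y$ turns the active factor $[x_1,\dots,x_{n-1},y_i]$ into $[\alpha(x_1),\dots,\alpha(x_{n-1}),\alpha(y_i)]$ and each spectator factor $\beta^{2}\alpha^{-1}(y_j)$ into $\alpha\beta^{2}\alpha^{-1}(y_j)=\beta^{2}(y_j)=\beta^{2}\alpha^{-1}(\alpha(y_j))$, which is precisely the spectator factor occurring in $\bar{\alpha}(X)\cdot\bar{\alpha}(Y)$; the argument for $\bar{\beta}$ is identical with $\beta\circ\beta^{2}\alpha^{-1}=\beta^{2}\alpha^{-1}\circ\beta$ replacing $\alpha\circ\beta^{2}\alpha^{-1}=\beta^{2}$.

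For the BiHom-Leibniz identity, condition (4), the decisive input is the reformulation of the BiHom-Nambu identity recorded just above the theorem, $\bar{\beta}^{2}(X)\cdot(\bar{\beta}(Y)\cdot\alpha(z))=(\bar{\beta}(X)\cdot\bar{\alpha}(Y))\cdot\beta^{2}(z)+\bar{\beta}^{2}(Y)\cdot(\bar{\beta}(X)\cdot\alpha(z))$ for $z\in A$. I would take $Z=z_1\wedge\cdots\wedge z_{n-1}$, expand $\bar{\beta}(Y)\cdot\bar{\alpha}(Z)$ as a sum over its $n-1$ slots, apply the operator $\bar{\beta}^{2}(X)\cdot(-)$ (which replaces one slot by $ad_{\bar{\beta}^{2}(X)}$ of it and the rest by $\beta^{2}\alpha^{-1}$ of them), and separate the terms in which $\bar{\beta}^{2}(X)$ hits the slot already carrying a factor $[\beta(y_1),\dots,\alpha(z_j)]$ from those in which it hits a spectator slot. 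On the first family one applies the reformulated identity to split each contribution into a $\bigl(\bar{\beta}(X)\cdot\bar{\alpha}(Y)\bigr)$-on-$z_j$ part and a $\bar{\beta}^{2}(Y)$-on-$\bigl(\bar{\beta}(X)\cdot\alpha(z_j)\bigr)$ part; the first parts reassemble into $(\bar{\beta}(X)\cdot\bar{\alpha}(Y))\cdot\bar{\beta}^{2}(Z)$, and the second parts, together with the spectator-slot terms, should reassemble into $\bar{\beta}^{2}(Y)\cdot(\bar{\beta}(X)\cdot\bar{\alpha}(Z))$. Matching the iterated twists ($\beta^{2}$, $\beta^{2}\alpha^{-1}$, $\beta^{4}\alpha^{-1}$, and so on) slot by slot uses the commutation relations above and the fact that $\beta^{2}\alpha^{-1}$ is a bracket morphism, while the final regrouping also calls on the skewsymmetry of the $n$-ary bracket in its first $n-1$ arguments to bring the wedge factors into a common order; this last computation is long but mechanical.

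The step I expect to be the main obstacle is the BiHom-skewsymmetry, condition (3): $\bar{\beta}(X)\cdot\bar{\alpha}(Y)=-\bar{\beta}(Y)\cdot\bar{\alpha}(X)$. After the simplification $\beta^{2}\alpha^{-1}\circ\alpha=\beta^{2}$ this reduces to the identity $\sum_{i}\beta^{2}(y_1)\wedge\cdots\wedge[\beta(x_1),\dots,\beta(x_{n-1}),\alpha(y_i)]\wedge\cdots\wedge\beta^{2}(y_{n-1})=-\sum_{j}\beta^{2}(x_1)\wedge\cdots\wedge[\beta(y_1),\dots,\beta(y_{n-1}),\alpha(x_j)]\wedge\cdots\wedge\beta^{2}(x_{n-1})$, which I would attack by pairing the $2(n-1)$ wedges on the two sides and cancelling them through repeated use of the BiHom-Nambu identity (with the roles of the $x$'s and $y$'s exchanged) together with the skewsymmetry of the $n$-ary bracket in its first $n-1$ arguments. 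This is genuinely the delicate point: for an ordinary $n$-Lie algebra (the case $\alpha=\beta=\mathrm{id}$) the analogous bracket on $\wedge^{n-1}A$ is in general only a Leibniz bracket and need not be skewsymmetric, so any argument here must use in an essential way the twisting maps $\alpha,\beta$ and the standing hypothesis that $\alpha$ is bijective and $\beta$ surjective; controlling the signs and the ordering of the spectator factors after all the rearrangements is where the real care is required.
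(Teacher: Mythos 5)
Your plan for conditions (1), (2) and (4) is essentially the paper's proof: $\bar{\alpha}\circ\bar{\beta}=\bar{\beta}\circ\bar{\alpha}$ slotwise, multiplicativity via $\alpha\circ\beta^{2}\circ\alpha^{-1}=\beta^{2}\circ\alpha^{-1}\circ\alpha$, and for the BiHom-Leibniz identity the expansion of $\bracket{\bar{\beta}^{2}(X),\bracket{\bar{\beta}(Y),\bar{\alpha}(Z)}}$ into active-slot and spectator-slot contributions followed by the element-level reformulated BiHom-Nambu identity on the active slots. (The paper organizes the spectator cross-terms so that they cancel in pairs rather than being absorbed into $\bracket{\bar{\beta}^{2}(Y),\bracket{\bar{\beta}(X),\bar{\alpha}(Z)}}$, but that is bookkeeping, not a different idea.)

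The genuine gap is condition (3), which you explicitly leave as ``the main obstacle'' without an argument, and the mechanism you sketch would not close it: the two sums $\sum_{i}\beta^{2}(y_1)\wedge\cdots\wedge\bracket{\beta(x_1),\dots,\beta(x_{n-1}),\alpha(y_i)}\wedge\cdots$ and $-\sum_{j}\beta^{2}(x_1)\wedge\cdots\wedge\bracket{\beta(y_1),\dots,\beta(y_{n-1}),\alpha(x_j)}\wedge\cdots$ are built from decomposables with entirely different spectator factors, and no amount of ``pairing wedges'' or invoking the BiHom-Nambu identity converts one family into the other. The observation you are missing --- which is the paper's \emph{entire} argument for this axiom --- is that one should compute the diagonal instead: every summand of $\bracket{\bar{\beta}(X),\bar{\alpha}(X)}$ contains the factor $\bracket{\beta(x_1),\dots,\beta(x_{n-1}),\alpha(x_i)}$ with $1\le i\le n-1$, and this element is zero because the $n$-ary BiHom-skewsymmetry applied to the transposition exchanging slots $i$ and $n$ fixes the argument tuple while flipping the sign. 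Hence $\bracket{\bar{\beta}(X),\bar{\alpha}(X)}=0$ on decomposables, and the paper deduces skewsymmetry from this by (implicit) polarization. Your instinct that this is the delicate axiom is in fact well founded: in the untwisted case the Daletskii--Takhtajan bracket also kills the diagonal on decomposables yet is only a Leibniz bracket, because polarization requires the diagonal to vanish on sums of decomposables as well --- a point the paper's computation does not address. But relative to the paper, the concrete missing step in your proposal is the vanishing of each factor $\bracket{\beta(x_1),\dots,\beta(x_{n-1}),\alpha(x_i)}$; without it, condition (3) is not proved at all.
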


\begin{proof}
Let $X=x_1\wedge ... \wedge x_{n-1}$ and $Y=y_1\wedge...\wedge y_{n-1}$, then we have:
\begin{align*}
\bar{\alpha}\circ \bar{\beta}(X) &= \bar{\alpha}\para{\beta(x_1)\wedge ... \wedge \beta(x_{n-1})}\\
&= \alpha\circ\beta(x_1)\wedge...\wedge\alpha\circ\beta(x_{n-1})\\
&= \beta\circ\alpha(x_1)\wedge...\wedge \beta\circ\alpha(x_{n-1})\\
&= \bar{\beta}\para{\alpha(x_1)\wedge...\wedge\alpha(x_{n-1})}\\
&=\bar{\beta}\circ \bar{\alpha}(X).
\end{align*}

\begin{align*}
\bar{\alpha}\para{\bracket{X,Y}} &=\sum_{i=1}^{n-1}\bar{\alpha}\para{\beta^2\circ\alpha^{-1}(y_1)\wedge ... \wedge X \cdot y_i \wedge ... \wedge\beta^2\circ\alpha^{-1}(y_{n-1})}\\
&= \sum_{i=1}^{n-1}\para{\alpha\circ\beta^2\circ\alpha^{-1}(y_1)\wedge ... \wedge \alpha(X \cdot y_i) \wedge ... \wedge \alpha\circ\beta^2\circ\alpha^{-1}(y_{n-1})}\\
&= \sum_{i=1}^{n-1}\para{\beta^2\circ\alpha^{-1}(\alpha(y_1))\wedge ... \wedge \bracket{\alpha(x_1),...,\alpha(x_{n-1}),\alpha(y_i)} \wedge ... \wedge \beta^2\circ\alpha^{-1}(\alpha(y_{n-1}))}\\
&= \bracket{\bar{\alpha}(X),\bar{\alpha}(Y)},
\end{align*}
that is $\bar{\alpha}$ is an algebra morphism. One can show, in the same way, that $\bar{\beta}$ is a morphism.
\begin{align*}
\bracket{\bar{\beta}(X),\bar{\alpha}(X)}&= \sum_{i=1}^{n-1} \beta^2\circ \alpha^{-1}(\alpha(x_1))\wedge ... \wedge \bracket{\beta(x_1),...,\beta(x_{n-1}),\alpha(x_i)}\wedge ... \wedge \beta^2\circ \alpha^{-1}(\alpha(x_{n-1}))\\
&=\sum_{i=1}^{n-1} \beta^2\circ \alpha^{-1}(\alpha(x_1))\wedge ... \wedge 0 \wedge ... \wedge \beta^2\circ \alpha^{-1}(\alpha(x_{n-1}))\\
&=0.
\end{align*}
For all $X=x_1\wedge ... \wedge x_{n-1}$, $Y=y_1\wedge ... \wedge y_{n-1}$ and $Z=z_1\wedge ... \wedge z_{n-1}$, we have
\begin{align*}
&\bracket{\bar{\beta}^2(X),\bracket{\bar{\beta}(Y),\bar{\alpha}(Z)}} = \sum_{i=1}^{n-1} \bracket{\bar{\beta}^2(X), \para{\beta^2(z_1)\wedge...\wedge \bracket{\beta(y_1),...,\beta(y_{n-1}),\alpha(z_i)}\wedge...\wedge \beta^2(y_{n-1}) }  }\\
&= \sum_{i=1}^{n-1} \sum_{j=1}^{i-1} \big(\beta^2\circ\alpha^{-1}\circ\beta^2(z_1)\wedge...\wedge \beta^2(X\cdot z_j)\wedge \\
&\qquad \wedge...\wedge \beta^2\circ\alpha^{-1}\para{\bracket{\beta(y_1),...,\beta(y_{n-1}),\alpha(z_i)} }\wedge...\wedge \beta^2\circ\alpha^{-1}\circ\beta^2(y_{n-1}) \big) \\
&+ \sum_{i=1}^{n-1} \sum_{j=i+1}^{n-1} \big( \beta^2\circ\alpha^{-1}\circ\beta^2(z_1)\wedge...\wedge \beta^2\circ\alpha^{-1}\para{\bracket{\beta(y_1),...,\beta(y_{n-1}),\alpha(z_i)} }\\
&\qquad \wedge...\wedge \beta^2(X\cdot z_j)\wedge...\wedge \beta^2\circ\alpha^{-1}\circ\beta^2(y_{n-1}) \big)\\
&+ \sum_{i=1}^{n-1} \big( \beta^2\circ\alpha^{-1}\circ\beta^2(z_1)\wedge...\wedge \bar{\beta}^2\para{X} \cdot \bracket{\beta(y_1),...,\beta(y_{n-1}),\alpha(z_i)}\wedge...\wedge \beta^2\circ\alpha^{-1}\circ\beta^2(y_{n-1}) \big) \\
&=  \sum_{i=1}^{n-1} \sum_{j=1}^{i-1} \big( \beta^2\circ\alpha^{-1}\circ\beta^2(z_1)\wedge...\wedge \beta^2(X\cdot z_j)\wedge...\wedge \beta^2\circ\alpha^{-1}\para{\bracket{\beta(y_1),...,\beta(y_{n-1}),\alpha(z_i)} } \wedge \\
&\qquad \wedge...\wedge \beta^2\circ\alpha^{-1}\circ\beta^2(y_{n-1}) \big) \\
& -  \sum_{i=1}^{n-1} \sum_{j=1}^{i-1} \big( \beta^2\circ\alpha^{-1}\circ\beta^2(z_1)\wedge...\wedge \beta^2(X\cdot z_j)\wedge...\wedge \beta^2\circ\alpha^{-1}\para{\bracket{\beta(y_1),...,\beta(y_{n-1}),\alpha(z_i)} }\wedge \\
&\qquad \wedge...\wedge \beta^2\circ\alpha^{-1}\circ\beta^2(y_{n-1}) \big) \\
&+\sum_{i=1}^{n-1} \para{\beta^2\circ\alpha^{-1}\circ\beta^2(z_1)\wedge...\wedge \bar{\beta}^2\para{X} \cdot \para{\bar{\beta}(Y) \cdot \alpha(z)}\wedge...\wedge \beta^2\circ\alpha^{-1}\circ\beta^2(y_{n-1}) } \\
&= \sum_{i=1}^{n-1} \para{\beta^2\circ\alpha^{-1}\circ\beta^2(z_1)\wedge...\wedge \bar{\beta}^2\para{X} \cdot \para{\bar{\beta}(Y) \cdot \alpha(z)}\wedge...\wedge \beta^2\circ\alpha^{-1}\circ\beta^2(y_{n-1}) }\\
&= \sum_{i=1}^{n-1} \para{\beta^2\circ\alpha^{-1}\circ\beta^2(z_1)\wedge...\wedge  \para{\bar{\beta}(X) \cdot \bar{\alpha}(Y)} \cdot \beta^2\para{z} \wedge...\wedge \beta^2\circ\alpha^{-1}\circ\beta^2(y_{n-1}) }\\
&+ \sum_{i=1}^{n-1} \para{\beta^2\circ\alpha^{-1}\circ\beta^2(z_1)\wedge...\wedge \bar{\beta}^2\para{Y} \cdot \para{\bar{\beta}(X) \cdot \alpha(z)}\wedge...\beta^2\circ\alpha^{-1}\circ\wedge \beta^2(y_{n-1}) }\\
&= \bracket{\bracket{\bar{\beta}(X),\bar{\alpha}(Y)},\bar{\beta}^2(Z)} + \bracket{\bar{\beta}^2(Y),\bracket{\bar{\beta}(X),\bar{\alpha}(Z)}}.
\end{align*}
\end{proof}

\subsection{Algebra Twisting}
The following result gives a method to construct BiHom-Lie algebras, and more generally $n$-BiHom-Lie algebras, starting from a Lie or $n$-Lie algebra and two commuting algebra endomorphisms. It generalizes results from \cite{YauHomHom} and \cite{AtMaSi:GenNambuAlg} for Hom-algebras.

\begin{theorem} \label{YauTwist}
Let $(A,\bracket{\cdot,...,\cdot})$ be an $n$-Lie algebra, and let $\alpha,\beta : A \to A$ be algebra morphisms such that $\alpha \circ \beta = \beta \circ \alpha$. The algebra $(A, \bracket{\cdot,...,\cdot}_{\alpha\beta},\alpha,\beta)$, where $\bracket{\cdot,...,\cdot}_{\alpha\beta}$ is defined by
\[\bracket{x_1,...,x_n}_{\alpha\beta} = \bracket{\alpha(x_1),...,\alpha(x_{n-1}),\beta(x_n)},\]
is an $n$-BiHom-Lie algebra.
\end{theorem}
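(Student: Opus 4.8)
The plan is to verify, one by one, the four axioms of an $n$-BiHom-Lie algebra for the triple $\para{A, \bracket{\cdot,...,\cdot}_{\alpha\beta}, \alpha, \beta}$, reducing each to the skewsymmetry and the fundamental (Filippov) identity of the underlying $n$-Lie algebra $\para{A,\bracket{\cdot,...,\cdot}}$ together with the hypotheses that $\alpha$ and $\beta$ are commuting algebra morphisms; the twisted bracket is manifestly $n$-linear. Axiom (1) is just the hypothesis $\alpha\circ\beta=\beta\circ\alpha$. For axiom (2), I would expand $\alpha\para{\bracket{x_1,...,x_n}_{\alpha\beta}} = \alpha\para{\bracket{\alpha(x_1),...,\alpha(x_{n-1}),\beta(x_n)}}$, push $\alpha$ through the bracket since it is a morphism, and compare the result with $\bracket{\alpha(x_1),...,\alpha(x_n)}_{\alpha\beta} = \bracket{\alpha^2(x_1),...,\alpha^2(x_{n-1}),\beta\alpha(x_n)}$; these coincide by $\alpha\beta=\beta\alpha$, and the computation for $\beta$ is identical.

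The observation that streamlines the last two axioms is that whenever $\bracket{\cdot,...,\cdot}_{\alpha\beta}$ is applied to arguments lying in the image of $\beta$ in the first $n-1$ slots and in the image of $\alpha$ in the last slot --- exactly the shape occurring in axioms (3) and (4) --- all twists collapse into a single map:
\[ \bracket{\beta(u_1),...,\beta(u_{n-1}),\alpha(u_n)}_{\alpha\beta} = \bracket{\alpha\beta(u_1),...,\alpha\beta(u_n)}, \]
using only $\alpha\beta=\beta\alpha$. Axiom (3), BiHom-skewsymmetry, is then immediate from the ordinary skewsymmetry of the $n$-Lie bracket. For the $n$-BiHom-Jacobi identity (axiom (4)), I would unwind both sides with this collapse and the fact that $\alpha,\beta$ are commuting morphisms: setting $\gamma := \alpha\circ\beta^2$ (still an algebra morphism), the left-hand side turns into $\bracket{\gamma(x_1),...,\gamma(x_{n-1}),\bracket{\gamma(y_1),...,\gamma(y_n)}}$, while the $k$-th summand of the right-hand side turns into $(-1)^{n-k}\bracket{\gamma(y_1),...,\widehat{\gamma(y_k)},...,\gamma(y_n),\bracket{\gamma(x_1),...,\gamma(x_{n-1}),\gamma(y_k)}}$.

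Then I would apply the fundamental identity of $A$, in the derivation form stating that $\mathrm{ad}_{(\gamma(x_1),...,\gamma(x_{n-1}))}$ is a derivation of $\bracket{\cdot,...,\cdot}$, to rewrite the left-hand side as $\sum_{k=1}^{n}\bracket{\gamma(y_1),...,\bracket{\gamma(x_1),...,\gamma(x_{n-1}),\gamma(y_k)},...,\gamma(y_n)}$, the inner bracket sitting in the $k$-th slot. To match this term by term against the right-hand side, for each $k$ I would move the inner bracket in the $k$-th right-hand summand from the last slot to the $k$-th slot using skewsymmetry of the $n$-Lie bracket; it crosses the $n-k$ entries $\gamma(y_{k+1}),...,\gamma(y_n)$ and so picks up a factor $(-1)^{n-k}$ that exactly cancels the explicit $(-1)^{n-k}$, so the two sides agree. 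I expect this sign reconciliation to be the only genuine bookkeeping step; everything else is straightforward substitution. As a consistency check, when $\alpha=\beta=\mathrm{id}$ all four axioms collapse to those of an $n$-Lie algebra, and for $n=2$ this recovers the Yau-type twisting that produces BiHom-Lie algebras.
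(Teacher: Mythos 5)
Your proposal is correct and follows essentially the same route as the paper: collapse all twists in the BiHom-shaped arguments to the single morphism $\alpha\circ\beta$ (resp.\ $\alpha\circ\beta^2$) using commutativity, reduce BiHom-skewsymmetry to ordinary skewsymmetry, and reduce the $n$-BiHom-Jacobi identity to the Filippov identity in derivation form, with the $(-1)^{n-k}$ arising exactly as you describe from moving the inner bracket to the last slot. The paper phrases the collapse as pulling $\alpha\circ\beta^2$ outside the whole nested bracket rather than keeping $\gamma$ argument-wise, but this is the same computation.
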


\begin{proof}
The maps $\alpha$ and $\beta$ commute, by hypothesis, we show that they are algebra morphisms, for all $x_1,...,x_n \in A$, we have:
\begin{align*}
\alpha\para{\bracket{x_1,...,x_n}_{\alpha\beta}} &= \alpha\para{\bracket{\alpha(x_1),...,\alpha(x_{n-1},\beta(x_n)}}\\
&= \bracket{\alpha^2(x_1),...,\alpha^2(x_{n-1},\alpha\circ\beta(x_n)}\\
&=\bracket{\alpha^2(x_1),...,\alpha^2(x_{n-1},\beta\circ\alpha(x_n)}\\
&=\bracket{\alpha(x_1),...,\alpha(x_n)}_{\alpha\beta}.
\end{align*}
One can prove, in a very similar way, that $\beta$ is also a morphism.
We have also:
\begin{align*}
\bracket{\beta(x_{\sigma(1)}),...,\beta(x_{\sigma(n-1)}),\alpha(x_{\sigma(n)})}_{\alpha\beta} &= \bracket{\alpha\circ\beta(x_{\sigma(1)}),...,\alpha\circ\beta(x_{\sigma(n-1)}),\beta\circ \alpha(x_{\sigma(n)})}\\
&= \alpha\circ\beta\para{\bracket{x_{\sigma(1)},...,x_{\sigma(n)}}}\\
&= Sgn(\sigma) \alpha\circ\beta\para{\bracket{x_1,...,x_n}} \\
&= Sgn(\sigma) \bracket{\alpha\circ\beta(x_{1}),...,\alpha\circ\beta(x_{n-1}),\beta\circ \alpha(x_{n})}\\
&=Sgn(\sigma) \bracket{\beta(x_1),...,\beta(x_{n-1}),\alpha(x_n)}_{\alpha\beta}.
\end{align*}
For all $x_1,...,x_{n-1},y_1,...,y_n \in A$, we have:
\begin{align*}
&\bracket{\beta^2(x_1),...,\beta^2(x_{n-1}),\bracket{\beta(y_1),...,\beta(y_{n-1}),\alpha(y_n)}_{\alpha\beta}}_{\alpha\beta}\\ =&\bracket{\alpha\circ\beta^2(x_1),...,\alpha\circ\beta^2(x_{n-1}),\beta\para{\bracket{\alpha\circ\beta(y_1),...,\alpha\circ\beta(y_{n-1}),\beta\circ\alpha(y_n)}}}\\
= & \alpha\circ\beta^2\para{\bracket{x_1,...,x_{n-1},\bracket{y_1,...,y_n}}} \\
= & \sum_{k=1}^n \alpha\circ\beta^2\para{\bracket{y_1,...,\bracket{x_1,...,x_{n-1},y_k},...,y_n}}\\
= & \sum_{k=1}^n (-1)^{n-k} \alpha\circ\beta^2\para{\bracket{y_1,...,\widehat{y_k},...,y_n,\bracket{x_1,...,x_{n-1},y_k}}}\\
= &  \sum_{k=1}^n (-1)^{n-k} \bracket{\alpha\circ\beta^2(y_1),...,\widehat{\alpha\circ\beta^2(y_k)},...,\alpha\circ\beta^2(y_n),\alpha\circ\beta^2\para{\bracket{x_1,...,x_{n-1},y_k}}}\\
= & \sum_{k=1}^n (-1)^{n-k} \bracket{\alpha\circ\beta^2(y_1),...,\widehat{\alpha\circ\beta^2(y_k)},...,\alpha\circ\beta^2(y_n),\beta\para{\bracket{\alpha\circ\beta(x_1),...,\alpha\circ\beta(x_{n-1}),\beta\circ\alpha(y_k)}}}\\
=& \sum_{k=1}^n (-1)^{n-k} \bracket{\beta^2(y_1),...,\widehat{\beta^2(y_k)},...,\beta^2(y_n),\bracket{\beta(x_1),...,\beta(x_{n-1}),\alpha(y_k)}_{\alpha\beta}}_{\alpha\beta}.
\end{align*}
\end{proof}

\begin{example}
We consider the $4$-dimensional $3$-Lie algebra defined with respect to a basis $(e_1,e_2,e_3,e_4)$, by:
\[ [e_1,e_2,e_3]=e_4 ; [e_1,e_2,e_4]=e_3 ; [e_1,e_3,e_4]=e_2 ; [e_2,e_3,e_4]=e_1. \]

We have two morphisms $\alpha$, $\beta$ of this algebra defined, with respect to the same basis, by:
\[ [\alpha]=\begin{pmatrix}-1 & 0 & 0 & 0 \\ 0 & 1 & 0 & 0 \\ 0 & 0 & 1 & 0 \\ 0 & 0 & 0 & -1 \end{pmatrix} ;
[\beta]= \begin{pmatrix} 0 & 0 & 0 & -1 \\ 0 & 0 & -1 & 0 \\ 0 & -1 & 0 & 0 \\ 1 & 0 & 0 & 0 \end{pmatrix}. \]
One can easily check that $\alpha$ and $\beta$ commute, then one may  to construct a $n$-BiHom-Lie algebra using Theorem \ref{YauTwist}, we get the following bracket:
\begin{align*}
[e_1,e_2,e_1]&= - e_3 ; [e_1,e_2,e_2]=e_4 ; [e_1,e_3,e_1]=-e_2 ; [e_1,e_3,e_3]=-e_4 ; \\ [e_1,e_4,e_2]&=e_2 ; [e_2,e_1,e_1]=-e_3 ; [e_1,e_2,e_1]=e_3 ; [e_2,e_1,e_2]=e_4 ; \\ [e_2,e_3,e_1]&=e_1 ; [e_2,e_3,e_4]=e_4 ; [e_2,e_4,e_2]=-e_1 ; [e_2,e_4,e_4]=-e_3 ; \\ [e_3,e_1,e_1]&=-e_2 ; [e_3,e_1,e_3]=e_4 ; [e_3,e_2,e_1]=e_1 ; [e_3,e_2,e_4]=e_4 ; \\ [e_3,e_4,e_3]&=-e_1 ; [e_3,e_4,e_4]=-e_2 ; [e_4,e_1,e_2]=-e_2 ; [e_4,e_1,e_3]=-e_3 ; \\ [e_4,e_2,e_2]&=e_1 ; [e_4,e_2,e_4]=-e_3 ; [e_4,e_3,e_3]=-e_1 ; [e_4,e_3,e_4]=-e_2 ;
\end{align*}
\end{example}



\section{Associative type $n$-ary BiHom-algebras}
In this section, we present  generalizations of $n$-ary algebras of associative type, namely  totally BiHom-associative and partially BiHom-associative algebras, we also give a generalization of the Yau twist corresponding to these structures. These algebras also generalize the $n$-ary Hom-algebra of associative type introduced in \cite{AtMaSi:GenNambuAlg}.

\begin{definition}
An $n$-ary  totally BiHom-associative algebra is a vector space $A$ together with an $n$-linear map $m: A^n \to A$ and two linear maps $\alpha$, $\beta$ satisfying the following conditions:
\begin{itemize}
\item $\alpha \circ \beta = \beta \circ \alpha$.
\item $\alpha\para{m\para{x_1,...,x_n}} = m\para{\alpha\para{x_1},...,\alpha\para{x_n}}, \forall x_1,...,x_n \in A$.
\item $\beta\para{m\para{x_1,...,x_n}} = m\para{\beta\para{x_1},...,\beta\para{x_n}}, \forall x_1,...,x_n \in A$.
\item Total BiHom-associativity: $\forall x_1,...,x_{2n-1}\in A, \forall i,j: 1\leq i,j \leq n$
\begin{align*}
m&\para{\alpha(x_1),...,\alpha(x_{i-1}),m(x_i,....,x_{n+i-1}),\beta(x_{n+i}),...,\beta(x_{2n-1})} \\
&= m\para{\alpha(x_1),...,\alpha(x_{j-1}),m(x_j,....,x_{n+j-1}),\beta(x_{n+j}),...,\beta(x_{2n-1})}.
\end{align*}
\end{itemize}
\end{definition}

\begin{definition}
An $n$-ary  partially BiHom-associative algebra is a vector space $A$ together with an $n$-linear map $m: A^n \to A$ and two linear maps $\alpha$, $\beta$ satisfying the following conditions:
\begin{itemize}
\item $\alpha \circ \beta = \beta \circ \alpha$.
\item $\alpha\para{m\para{x_1,...,x_n}} = m\para{\alpha\para{x_1},...,\alpha\para{x_n}}, \forall x_1,...,x_n \in A$.
\item $\beta\para{m\para{x_1,...,x_n}} = m\para{\beta\para{x_1},...,\beta\para{x_n}}, \forall x_1,...,x_n \in A$.
\item  Partial BiHom-associativity: $\forall x_1,...,x_{2n-1}\in A,$
\begin{equation*}
\sum_{i=1}^n m\para{\alpha(x_1),...,\alpha(x_{i-1}),m(x_i,....,x_{n+i-1}),\beta(x_{n+i}),...,\beta(x_{2n-1})} =0.
\end{equation*}
\end{itemize}
\end{definition}

\begin{remark}
In the definitions above, the particular case where $\alpha=\beta$ leads us to the definitions of $n$-ary totally Hom-associative (resp. partially Hom-associative) algebras. Choosing $\alpha=\beta=Id_A$ gives the definitions of $n$-ary totally associative (resp. partially associative) algebras.
\end{remark}

Now, we introduce as for $n$-Lie algebras, a generalization of the Yau twist, allowing us to construct $n$-ary BiHom-algebra of associative type given an $n$-ary algebra of associative type and two linear maps satisfying some conditions.

\begin{proposition}
Let $(A,m)$ be an $n$-ary totally associative (resp. partially associative) algebra, and let $\alpha$, $\beta$ be two algebra endomorphisms of $A$ satisfying $\alpha \circ \beta = \beta \circ \alpha$. We define $m_{\alpha,\beta}:A^n \to A$ by:
\[ m_{\alpha,\beta}\para{x_1,...,x_n}=m\para{\alpha^{n-1}(x_1),\alpha^{n-2}\circ \beta(x_2),...,\alpha\circ\beta^{n-2}(x_{n-1}),\beta^{n-1}(x_n)}. \]
Then $(A,m_{\alpha,\beta},\alpha^{n-1},\beta^{n-1})$, which we will denote by $A_{\alpha,\beta}$, is an $n$-ary totally  (resp. partially) BiHom-associative algebra.
\end{proposition}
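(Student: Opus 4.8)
The plan is to check, one after another, the four axioms defining an $n$-ary totally (resp. partially) BiHom-associative algebra for the datum $A_{\alpha,\beta}=\para{A,m_{\alpha,\beta},\alpha^{n-1},\beta^{n-1}}$. The commutation $\alpha^{n-1}\circ\beta^{n-1}=\beta^{n-1}\circ\alpha^{n-1}$ follows immediately from $\alpha\circ\beta=\beta\circ\alpha$. For the two multiplicativity conditions, I would apply $\alpha^{n-1}$ to both sides of the definition of $m_{\alpha,\beta}$: since $\alpha$, hence $\alpha^{n-1}$, is an endomorphism of $(A,m)$ it passes through $m$, and since $\alpha$ and $\beta$ commute the resulting composite weights rearrange to give $m_{\alpha,\beta}\para{\alpha^{n-1}(x_1),\dots,\alpha^{n-1}(x_n)}$; the same argument applied with $\beta^{n-1}$ yields the other multiplicativity identity. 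These steps use nothing beyond commutation and the morphism property and are routine.

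The substantive step is the BiHom-associativity identity. Fixing $i$ with $1\leq i\leq n$, I would expand
\[
m_{\alpha,\beta}\para{\alpha^{n-1}(x_1),\dots,\alpha^{n-1}(x_{i-1}),m_{\alpha,\beta}(x_i,\dots,x_{n+i-1}),\beta^{n-1}(x_{n+i}),\dots,\beta^{n-1}(x_{2n-1})}
\]
by substituting the definition of $m_{\alpha,\beta}$ at both the outer and the inner occurrence, and then using multiplicativity of $\alpha$ and $\beta$ on $m$ to absorb the weight $\alpha^{n-i}\circ\beta^{i-1}$ --- which the outer $m_{\alpha,\beta}$ attaches to its $i$-th entry --- inside the inner product. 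The only place where any real computation is needed is the exponent bookkeeping: one checks that whether a given argument $x_m$ lands in a slot before the inner product ($1\leq m\leq i-1$), inside it ($i\leq m\leq n+i-1$), or after it ($n+i\leq m\leq 2n-1$), the total twisting map accumulated on $x_m$ is exactly $\alpha^{2n-1-m}\circ\beta^{m-1}$, \emph{independently of $i$}. Writing $z_m:=\alpha^{2n-1-m}\circ\beta^{m-1}(x_m)$, the displayed expression therefore equals
\[
m\para{z_1,\dots,z_{i-1},m(z_i,\dots,z_{n+i-1}),z_{n+i},\dots,z_{2n-1}},
\]
the iterated product in $(A,m)$ with the inner factor in position $i$.

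From here the conclusion is immediate. In the totally associative case the expression just obtained is independent of $i$ by total associativity of $m$, hence so is the original one, which is precisely total BiHom-associativity of $m_{\alpha,\beta}$ with twisting maps $\alpha^{n-1},\beta^{n-1}$. In the partially associative case, summing over $i=1,\dots,n$ the equalities established above and using $\sum_{i=1}^n m\para{z_1,\dots,m(z_i,\dots,z_{n+i-1}),\dots,z_{2n-1}}=0$ gives partial BiHom-associativity of $m_{\alpha,\beta}$. The main obstacle is concentrated in the middle step: verifying that the specific weighting $\alpha^{n-1},\alpha^{n-2}\circ\beta,\dots,\beta^{n-1}$ built into $m_{\alpha,\beta}$ is exactly what makes the double expansion collapse to the uniform exponents $\alpha^{2n-1-m}\circ\beta^{m-1}$ on $x_m$; once this is seen, everything else is forced by the hypothesis on $(A,m)$ and by $\alpha\circ\beta=\beta\circ\alpha$, which makes all the relevant maps commute as needed.
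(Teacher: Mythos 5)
Your proposal is correct and follows essentially the same route as the paper's proof: expand both occurrences of $m_{\alpha,\beta}$, use the morphism property and commutativity of $\alpha,\beta$ to collect on each argument $x_m$ the total twist $\alpha^{2n-1-m}\circ\beta^{m-1}$ independently of the position $i$ of the inner product, and then invoke total (resp. partial) associativity of $m$. The exponent bookkeeping you single out as the key step is exactly the computation carried out in the paper.
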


\begin{proof}
The first three conditions come down from $\alpha$ and $\beta$ being commuting algebra morphisms, we only have to show the total (resp. partial) BiHom-associativity. Let $x_1,...,x_{2n-1} \in A$, we have, for $i : 1\leq i \leq n$:
\begin{align*}
m_{\alpha,\beta}&\para{\alpha^{n-1}(x_1),...,\alpha^{n-1}(x_{i-1}),m_{\alpha,\beta}(x_i,...,x_{i+n-1}),\beta^{n-1}(x_{n+i}),...,\beta^{n-1}(x_{2n-1}  }\\
&=m(   \alpha^{n-1}\circ\alpha^{n-1}(x_1),...,\alpha^{n-i+1}\circ\beta^{i-2}\circ\alpha^{n-1}(x_{i-1}),\alpha^{n-i}\circ\beta^{i-1}\circ\\
&\circ m(\alpha^{n-1}(x_i),...,\beta^{n-1}(x_{i+n-1}) ),\alpha^{n-i-1}\circ\beta^{i}\circ\beta^{n-1}(x_{n+i}),...,\beta^{n-1}\circ\beta^{n-1}(x_{2n-1})  )\\
&= m(   \alpha^{2n-2}(x_1),...,\alpha^{2n-i}\circ\beta^{i-2}(x_{i-1}),m(\alpha^{2n-i-1}\circ\beta^{i-1}(x_i),...,\times\\
&\times\alpha^{n-i}\circ\beta^{n+i-2}(x_{n+i-1}) ), \alpha^{n-i-1}\circ\beta^{n+i-1}(x_{n+i}),...,\beta^{2n-2}(x_{2n-1})  ).\\
\end{align*}
For  $j\neq i$, we get, following the same process:
\begin{align*}
m_{\alpha,\beta}&\para{\alpha^{n-1}(x_1),...,\alpha^{n-1}(x_i),...,\alpha^{n-1}(x_{i-1}),m_{\alpha,\beta}(x_i,...,x_{i+n-1}),\beta^{n-1}(x_{n+i}),...,\beta^{n-1}(x_{2n-1}  }\\
&= m(   \alpha^{2n-2}(x_1),...,\alpha^{2n-j-1}\circ\beta^{j-1}(x_j),...,\alpha^{2n-j}\circ\beta^{j-2}(x_{j-1}),m(\alpha^{2n-j-1}\circ\beta^{j-1}(x_j)\times\\
&\times ,...,\alpha^{n-j}\circ\beta^{n+j-2}(x_{n+j-1}) ), \alpha^{n-j-1}\circ\beta^{n+j-1}(x_{n+j}),...,\beta^{2n-2}(x_{2n-1})  ).\\
\end{align*}
Using total associativity of $m$, we get:
\begin{align*}
m&(   \alpha^{2n-2}(x_1),...,\alpha^{2n-j-1}\circ\beta^{j-1}(x_j),...,\alpha^{2n-j}\circ\beta^{j-2}(x_{j-1}), m(\alpha^{2n-j-1}\circ\beta^{j-1}(x_j)\times\\
&\times,...,\alpha^{n-j}\circ\beta^{n+j-2}(x_{n+j-1}) ), \alpha^{n-j-1}\circ\beta^{n+j-1}(x_{n+j}),...,\beta^{2n-2}(x_{2n-1})  )\\
&= m(   \alpha^{2n-2}(x_1),...,\alpha^{2n-i}\circ\beta^{i-2}(x_{i-1}),m(\alpha^{2n-i-1}\circ\beta^{i-1}(x_i),...,\alpha^{n-i}\circ\beta^{n+i-2}(x_{n+i-1}) ),\times\\
&\times \alpha^{n-i-1}\circ\beta^{n+i-1}(x_{n+i}),...,\beta^{2n-2}(x_{2n-1})  )\\
&=m_{\alpha,\beta}\para{\alpha^{n-1}(x_1),...,\alpha^{n-1}(x_{i-1}),m_{\alpha,\beta}(x_i,...,x_{i+n-1}),\beta^{n-1}(x_{n+i}),...,\beta^{n-1}(x_{2n-1}  }.\\
\end{align*}

Now we show the partial associativity condition for the relevant case. Let $x_1,...,x_{2n-1} \in A$, we have:
\begin{align*}
\sum_{i=1}^n m_{\alpha,\beta}&\para{\alpha^{n-1}(x_1),...,\alpha^{n-1}(x_{i-1}),m_{\alpha,\beta}(x_i,...,x_{i+n-1}),\beta^{n-1}(x_{n+i}),...,\beta^{n-1}(x_{2n-1}  }\\
&=\sum_{i=1}^n m(   \alpha^{n-1}\circ\alpha^{n-1}(x_1),...,\alpha^{n-i+1}\circ\beta^{i-2}\circ\alpha^{n-1}(x_{i-1}),\alpha^{n-i}\circ\beta^{i-1}\circ\\
&\circ m(\alpha^{n-1}(x_i),...,\beta^{n-1}(x_{i+n-1}) ),\alpha^{n-i-1}\circ\beta^{i}\circ\beta^{n-1}(x_{n+i}),...,\beta^{n-1}\circ\beta^{n-1}(x_{2n-1})  )\\
&= \sum_{i=1}^n m(   \alpha^{2n-2}(x_1),...,\alpha^{2n-i}\circ\beta^{i-2}(x_{i-1}),m(\alpha^{2n-i-1}\circ\beta^{i-1}(x_i),...,\times\\
&\times\alpha^{n-i}\circ\beta^{n+i-2}(x_{n+i-1}) ), \alpha^{n-i-1}\circ\beta^{n+i-1}(x_{n+i}),...,\beta^{2n-2}(x_{2n-1})  )\\
&=0.
\end{align*}

\end{proof}

The definitions and result above can be extended to some variants of $n$-ary algebras of associative type. Namely $n$-ary weak totally associative algebras, where the total associativity holds only for $i=1$, $j=n$ in the definition above, and $n$-ary alternate partially associative algebras, where the the terms in the sum defining partial associativity are multiplied by $(-1)^i$.




\section{$(n+1)$-BiHom-Lie algebras induced by $n$-BiHom-Lie algebras}
The aim of this section is to extend, to $n$-BiHom-Lie algebras, the construction of $(n+1)$-Hom-Lie algebras from $n$-Hom-Lie algebras introduced in \cite{ams:n}, and to see under which conditions such a generalization is possible. First, we give some definitions and lemmas allowing to reach our goal.

\begin{definition} [\cite{ams:ternary}, \cite{ams:n}]
Let $A$ be a vector space, $\phi : A^n \to A$ be an $n$-linear map and $\tau$ be a linear form. We define the $(n+1)$-linear map $\phi_\tau$ by:
\[\forall x_1,...,x_{n+1} \in A, \phi_\tau\para{x_1,...,x_{n+1}} = \sum_{i=1}^{n+1} (-1)^{i-1} \tau(x_i)\phi\para{x_1,...,\widehat{x_i},...,x_{n+1}}. \]
\end{definition}
 As in the case of $n$-Lie or $n$-Hom-Lie algebras, we focus on linear forms $\tau$ satisfying a generalization of the properties of the trace of matrices, namely, we consider the following definition:
\begin{definition} 
Let $A$ be a vector space, $\phi : A^n \to A$ be an $n$-linear map, let $\tau$ be a linear form and $\alpha,\beta : A \to A$ be linear maps. The map $\tau$ is said to be an $(\alpha,\beta)$-twisted $\phi$-trace if if satisfies the following condition:
\[ \forall x_1,...,x_n \in A, \tau\para{\phi\para{\beta(x_1),...,\beta(x_{n-1}),\alpha(x_n)}} = 0. \]
If the maps $\phi$, $\alpha$ and $\beta$ above are clear from the context, we will simply refer to such linear forms by twisted traces.
\end{definition}

\begin{lemma}
Let $A$ be a vector space, $\phi : A^n \to A$ be an $n$-linear map,  $\tau$ be a linear form and $\alpha,\beta : A \to A$ be linear maps. If $\tau$ is an $(\alpha,\beta)$-twisted $\phi$-trace satisfying the condition $\tau(\alpha(x))\beta(y) = \tau(\beta(x))\alpha(y)$, $\forall x,y \in A$, then $\tau$ is an $(\alpha,\beta)$-twisted $\phi_\tau$-trace.
\end{lemma}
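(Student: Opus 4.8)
The plan is to verify directly the defining identity of an $(\alpha,\beta)$-twisted $\phi_\tau$-trace, namely that $\tau\para{\phi_\tau\para{\beta(x_1),\dots,\beta(x_n),\alpha(x_{n+1})}}=0$ for all $x_1,\dots,x_{n+1}\in A$. First I would expand $\phi_\tau$ by its definition, substituting $\beta(x_i)$ into the first $n$ slots and $\alpha(x_{n+1})$ into the last one. This yields $n+1$ summands, each of the form $(-1)^{i-1}\,\tau(y_i)\,\phi(\dots)$, where $y_i=\beta(x_i)$ for $i\le n$ and $y_{n+1}=\alpha(x_{n+1})$. Applying the linear form $\tau$ and using its linearity, it suffices to show that each of the $n+1$ resulting scalars vanishes.

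For $1\le i\le n$, the inner argument is $\phi\para{\beta(x_1),\dots,\widehat{\beta(x_i)},\dots,\beta(x_n),\alpha(x_{n+1})}$, which has exactly $n-1$ entries with $\beta$ applied followed by a single entry with $\alpha$ applied in the last position; this is precisely the pattern occurring in the definition of an $(\alpha,\beta)$-twisted $\phi$-trace, so $\tau$ annihilates it, and all $n$ of these terms drop out.

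The remaining term is $(-1)^n\,\tau(\alpha(x_{n+1}))\,\tau\para{\phi\para{\beta(x_1),\dots,\beta(x_n)}}$, and this is \textbf{the only delicate point}, since here the $\alpha$-slot has been deleted and the twisted-trace property no longer applies verbatim; this is exactly what the extra hypothesis is designed for. I would push the scalar $\tau(\alpha(x_{n+1}))$ into the last argument of $\phi$ by multilinearity, apply the hypothesis $\tau(\alpha(x))\beta(y)=\tau(\beta(x))\alpha(y)$ with $x=x_{n+1}$ and $y=x_n$ to rewrite $\tau(\alpha(x_{n+1}))\beta(x_n)=\tau(\beta(x_{n+1}))\alpha(x_n)$, and pull the scalar back out, obtaining
\[
\tau(\alpha(x_{n+1}))\,\phi\para{\beta(x_1),\dots,\beta(x_{n-1}),\beta(x_n)} = \tau(\beta(x_{n+1}))\,\phi\para{\beta(x_1),\dots,\beta(x_{n-1}),\alpha(x_n)}.
\]
Applying $\tau$ to both sides and invoking the $(\alpha,\beta)$-twisted $\phi$-trace property on the right-hand side makes this term vanish as well, which finishes the argument.

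I do not expect a genuine obstacle here: the computation is short, and the only thing requiring attention is recognising that the summand coming from removing the $\alpha$-argument is exactly the situation the second hypothesis was introduced to handle.
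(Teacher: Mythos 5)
Your proposal is correct and follows exactly the paper's argument: expand $\phi_\tau$, kill the first $n$ summands by the twisted-trace property, and handle the last summand by absorbing the scalar $\tau(\alpha(x_{n+1}))$ into the final slot, swapping via the hypothesis $\tau(\alpha(x))\beta(y)=\tau(\beta(x))\alpha(y)$, and applying the twisted-trace property once more. No differences worth noting.
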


\begin{proof}
Let $x_1,...,x_{n+1} \in A$, we have:
\begin{align*}
&\tau\para{\phi_\tau\para{\beta(x_1),...,\beta(x_n),\alpha(x_{n+1})}} \\
=& \sum_{i=1}^n (-1)^{i-1} \tau\para{ \tau(\beta(x_i)) \phi\para{\beta(x_1),...,\widehat{x_i},...,\beta(x_n),\alpha(x_{n+1})}} \\
+ & (-1)^n \tau\para{\tau(\alpha(x_{n+1}))\phi\para{\beta(x_1),...,\beta(x_n)}}\\
= &  (-1)^n \tau\para{\phi\para{\beta(x_1),...,\tau(\alpha(x_{n+1}))\beta(x_n)}}\\
= &  (-1)^n \tau\para{\tau(\beta(x_{n+1}))\phi\para{\beta(x_1),...,\alpha(x_n)}}\\
= & 0.
\end{align*}
\end{proof}

\begin{lemma} \label{morphinduced}
Let $A$ (resp. $B$) be a vector space, $\phi$ (resp. $\psi$) be an $n$-linear map on $A$ (resp. $B$) and $\tau$ (resp. $\sigma$) be a linear form on $A$ (resp. $B$). Let $f : A \to B$ be a linear map satisfying:
\[\forall x_1,...,x_n \in A, f\para{\phi\para{x_1,...,x_n}} = \psi \para{f(x_1),...,f(x_n)},\]
and $\tau = \sigma \circ f$, then $f$ satisfies:
\[\forall x_1,...,x_{n+1} \in A, f\para{\phi_\tau\para{x_1,...,x_{n+1}}} = \psi_\tau \para{f(x_1),...,f(x_{n+1})}.\]
\end{lemma}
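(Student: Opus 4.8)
The plan is to verify the claimed identity by direct expansion of the definition of $\phi_\tau$ on both sides, tracking how $f$ interacts with each summand. Concretely, I would start from the right-hand side $\psi_\sigma\para{f(x_1),\dots,f(x_{n+1})}$ (note the linear form on $B$ is $\sigma$, not $\tau$, and $\psi_\sigma$ is the operation built from $\psi$ and $\sigma$ via the first definition of this section), write it out as $\sum_{i=1}^{n+1}(-1)^{i-1}\sigma(f(x_i))\,\psi\para{f(x_1),\dots,\widehat{f(x_i)},\dots,f(x_{n+1})}$, and then rewrite each factor using the two hypotheses. By the morphism hypothesis on $f$, we have $\psi\para{f(x_1),\dots,\widehat{f(x_i)},\dots,f(x_{n+1})} = f\para{\phi\para{x_1,\dots,\widehat{x_i},\dots,x_{n+1}}}$ (applying the identity to the $n$ arguments $x_1,\dots,\widehat{x_i},\dots,x_{n+1}$). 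By the hypothesis $\tau = \sigma\circ f$, we have $\sigma(f(x_i)) = \tau(x_i)$, a scalar. Substituting both and pulling the scalar $\tau(x_i)$ inside $f$ by linearity, the $i$-th summand becomes $(-1)^{i-1} f\para{\tau(x_i)\,\phi\para{x_1,\dots,\widehat{x_i},\dots,x_{n+1}}}$.

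Summing over $i$ and using linearity of $f$ to pull the sum inside, the right-hand side equals $f\para{\sum_{i=1}^{n+1}(-1)^{i-1}\tau(x_i)\,\phi\para{x_1,\dots,\widehat{x_i},\dots,x_{n+1}}}$, which is exactly $f\para{\phi_\tau\para{x_1,\dots,x_{n+1}}}$ by the definition of $\phi_\tau$. This completes the verification. I would present this as a short displayed chain of equalities, annotating each step with the hypothesis used (morphism property, then $\tau=\sigma\circ f$, then linearity of $f$).

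There is no real obstacle here: the argument is a routine bookkeeping computation, and the only point requiring any care is making sure the signs $(-1)^{i-1}$ and the hat positions match up termwise on the two sides, which they do because the index $i$ plays the same combinatorial role in $\phi_\tau$ and in $\psi_\sigma$. One should also double-check that the statement as typeset intends $\psi_\tau$ to mean $\psi_\sigma$ (i.e. the operation on $B$ twisted by the linear form on $B$ that restricts correctly to $\tau$ along $f$); with that reading the proof goes through verbatim, and in fact the identity $\tau = \sigma\circ f$ is precisely what is needed to make the scalar coefficients transfer across $f$.
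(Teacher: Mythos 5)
Your proposal is correct and is essentially the paper's own proof run in the opposite direction: the paper expands $f\para{\phi_\tau\para{x_1,\dots,x_{n+1}}}$ term by term, applies the morphism property and $\tau=\sigma\circ f$, and lands on $\psi_\sigma\para{f(x_1),\dots,f(x_{n+1})}$, which is the same chain of equalities you write. Your observation that the $\psi_\tau$ in the statement should read $\psi_\sigma$ is also right, and is confirmed by the final line of the paper's computation.
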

\begin{proof}
For all $x_1,...,x_{n+1} \in A$, we have:
\begin{align*}
f\para{\phi_\tau\para{x_1,...,x_{n+1}}} &= \sum_{i=1}^{n+1} (-1)^{i-1} \tau(x_i)f\para{\phi\para{x_1,...,x_{i-1},x_{i+1},...,x_{n+1}}}\\
&=  \sum_{i=1}^{n+1} (-1)^{i-1} \tau(x_i)\psi\para{f(x_1),...,f(x_{i-1}),f(x_{i+1}),...,f(x_{n+1})}\\
&=  \sum_{i=1}^{n+1} (-1)^{i-1} \sigma(f(x_i))\psi\para{f(x_1),...,f(x_{i-1}),f(x_{i+1}),...,f(x_{n+1})}\\
&=  \psi_\sigma\para{f(x_1),...,f(x_{n+1})}.
\end{align*}
\end{proof}

Now, using the definitions and lemmas above, one can, under some conditions, construct an $(n+1)$-BiHom-Lie algebra using an $n$-BiHom-Lie algebra and a twisted trace, this construction is given by the following theorem:
\begin{theorem} \label{inducedbihom} 
Let $(A,\bracket{\cdot,...,\cdot},\alpha,\beta)$ be an $n$-BiHom-Lie algebra (resp. Leibniz) an $(\alpha,\beta)$-twisted $\bracket{\cdot,...,\cdot}$-trace. If the conditions:
\[\forall x,y \in A, \tau(\alpha(x))\beta(y) = \tau(\beta(x))\alpha(y), \]
\[ \tau \circ \alpha = \tau \text{ and } \tau \circ \beta = \tau, \]
are satisfied then $(A,\bracket{\cdot,...,\cdot}_\tau,\alpha,\beta)$ is an $(n+1)$-BiHom-Lie algebra (resp. Leibniz). We say that this algebra is induced by $(A,\bracket{\cdot,...,\cdot},\alpha,\beta)$.

\end{theorem}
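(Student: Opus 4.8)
The plan is to verify, for $(A,\bracket{\cdot,\dots,\cdot}_\tau,\alpha,\beta)$, the four conditions in the definition of an $(n+1)$-BiHom-Lie algebra (the Leibniz case being completely parallel, with the BiHom-Nambu identity replacing the BiHom-Jacobi identity throughout). Write $B:=\bracket{\cdot,\dots,\cdot}_\tau$. The condition $\alpha\circ\beta=\beta\circ\alpha$ is assumed. That $\alpha$ and $\beta$ are morphisms of $B$ is exactly the conclusion of Lemma~\ref{morphinduced}, applied with the same space $A=B$, with $\phi=\psi=\bracket{\cdot,\dots,\cdot}$, $\sigma=\tau$, and $f=\alpha$ (resp.\ $f=\beta$): its hypothesis $f\circ\phi=\psi\circ(f\times\cdots\times f)$ is the statement that $\alpha$ (resp.\ $\beta$) is a morphism of the original bracket, and $\tau=\sigma\circ f$ becomes $\tau\circ\alpha=\tau$ (resp.\ $\tau\circ\beta=\tau$), which is among the hypotheses.

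The key reduction, used for the last two conditions, is the identity
\[
\bracket{\beta(x_1),\dots,\beta(x_n),\alpha(x_{n+1})}_\tau=\Phi_\tau(x_1,\dots,x_{n+1}),\qquad\text{where }\Phi(z_1,\dots,z_n):=\bracket{\beta(z_1),\dots,\beta(z_{n-1}),\alpha(z_n)},
\]
and $\Phi$ is a skew-symmetric $n$-linear map by the BiHom-skew-symmetry of $\bracket{\cdot,\dots,\cdot}$. To see this, expand the left-hand side by the definition of $B$ and use $\tau\circ\alpha=\tau\circ\beta=\tau$; every term already has the form $(-1)^{j-1}\tau(x_j)\,\Phi(x_1,\dots,\widehat{x_j},\dots,x_{n+1})$ except $(-1)^n\tau(\alpha(x_{n+1}))\bracket{\beta(x_1),\dots,\beta(x_n)}$, and feeding the scalar into the last slot and using $\tau(\alpha(x_{n+1}))\beta(x_n)=\tau(\beta(x_{n+1}))\alpha(x_n)$ together with linearity rewrites it as $(-1)^n\tau(x_{n+1})\Phi(x_1,\dots,x_n)$, which is the missing $j=n+1$ term of $\Phi_\tau$. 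Since $\Phi$ is skew-symmetric, so is $\Phi_\tau$ (the standard fact that contracting, equivalently wedging, a skew multilinear map with a linear form yields a skew multilinear map); combined with the reduction identity this immediately gives the BiHom-skew-symmetry of $B$, namely $\bracket{\beta(x_{\sigma(1)}),\dots,\alpha(x_{\sigma(n+1)})}_\tau=Sgn(\sigma)\bracket{\beta(x_1),\dots,\alpha(x_{n+1})}_\tau$ for all $\sigma\in S_{n+1}$.

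For the $(n+1)$-BiHom-Jacobi identity I would expand both sides, the outer bracket first, using the definition of $B$. On each side, the single term in which $\tau$ lands on the inner $B$-bracket carries the factor $\tau\big(\bracket{\beta(\cdot),\dots,\beta(\cdot),\alpha(\cdot)}_\tau\big)$, which is $0$: by the lemma preceding Lemma~\ref{morphinduced} (the one whose proof uses precisely $\tau(\alpha(x))\beta(y)=\tau(\beta(x))\alpha(y)$), $\tau$ is an $(\alpha,\beta)$-twisted $\bracket{\cdot,\dots,\cdot}_\tau$-trace. In each remaining term the inner bracket is, by the reduction identity, a $\tau$-weighted sum of original brackets $\bracket{\beta(\cdot),\dots,\beta(\cdot),\alpha(\cdot)}$; after pulling these scalars out using $\tau\circ\alpha=\tau\circ\beta=\tau$, every term on the left-hand side has the form $\bracket{\beta^2(u_1),\dots,\beta^2(u_{n-1}),\bracket{\beta(v_1),\dots,\beta(v_{n-1}),\alpha(v_n)}}$ and the $n$-BiHom-Jacobi identity of $(A,\bracket{\cdot,\dots,\cdot},\alpha,\beta)$ applies. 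Expanding the right-hand side the same way, its terms split into those whose innermost bracket contains $n-1$ of the $x_i$ together with some $y_k$ in the $\alpha$-slot, which are to be matched one-to-one with the (thrice-expanded) left-hand side, and those whose innermost bracket is $\Phi(x_1,\dots,x_n)$, which occur twice, once for each ordering of the pair of deleted $y$-indices, and are to cancel with each other.

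The main obstacle is this final step: the index-and-sign bookkeeping showing that the signs produced by the three nested applications of the map $\phi\mapsto\phi_\tau$ on the left (a sign from deleting an $x_i$, a sign from deleting a $y_j$, and the sign $(-1)^{n-\ell}$ from the inner $n$-BiHom-Jacobi identity) combine to the signs appearing on the right (the sign $(-1)^{(n+1)-k}$ from the outer sum, a sign from deleting a $y_m$, a sign from deleting an $x_i$ inside the inner $(n+1)$-bracket), and that the ``pure-$x$'' terms cancel in pairs; each of these is a short parity check once the summation indices on the two sides are identified. For the $n$-BiHom-Lie-Leibniz case the same scheme works with the BiHom-Nambu identity, the only change being that inner brackets stay in place rather than being moved to the last slot.
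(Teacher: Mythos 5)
Your overall architecture matches the paper's: Lemma \ref{morphinduced} for multiplicativity of $\alpha$ and $\beta$ (applied exactly as you describe), the unnamed lemma before it to kill the terms where $\tau$ hits the inner $\tau$-bracket, and then expansion of both sides of the $(n+1)$-BiHom-Jacobi (resp.\ Nambu) identity followed by the $n$-ary identity of the original algebra and term matching. Where you genuinely diverge is the BiHom-skewsymmetry: the paper verifies it by brute force on two kinds of transpositions (swapping $x_j,x_{j+1}$ among the first $n$ slots, and swapping $x_n,x_{n+1}$ across the $\beta$/$\alpha$ boundary), each a page of sign-chasing, whereas you prove the single reduction identity $\bracket{\beta(x_1),\dots,\beta(x_n),\alpha(x_{n+1})}_\tau=\Phi_\tau(x_1,\dots,x_{n+1})$ with $\Phi(z_1,\dots,z_n)=\bracket{\beta(z_1),\dots,\beta(z_{n-1}),\alpha(z_n)}$ alternating, and then invoke the standard fact that contracting an alternating map with a linear form stays alternating. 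Your reduction is correct (the last term is converted using $\tau(\alpha(x_{n+1}))\beta(x_n)=\tau(\beta(x_{n+1}))\alpha(x_n)$ and multilinearity, exactly the manipulation the paper performs inline), and it buys you the full $S_{n+1}$-skewsymmetry at once rather than generator by generator; it also cleanly isolates where each hypothesis is used. The same identity is implicitly what lets the paper replace $\tau(\alpha(y_{n+1}))\bracket{\beta(y_1),\dots,\beta(y_n)}$ by the missing $j=n+1$ term inside $L_1$.

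The one thing you have not actually done is the part where the paper spends essentially all of its effort: the triple-indexed sign bookkeeping showing that, after expansion, the $L$- and $R$-sides coincide and the ``pure-$x$'' terms (those whose innermost bracket is $\bracket{\beta(x_1),\dots,\beta(x_n)}$) cancel in pairs. Your description of which terms match with which, and of the pairwise cancellation over the two orderings of the deleted $y$-indices, is accurate for the Jacobi case (the symmetry of $\tau(y_j)\tau(y_k)$ under $j\leftrightarrow k$, granted $\tau\circ\alpha=\tau\circ\beta=\tau$, against the antisymmetric sign $(-1)^{j-k}$ versus $(-1)^{j-k+1}$ does the job), but calling it ``a short parity check'' undersells it, particularly in the Nambu case, where the inner bracket occupies slot $k$ rather than the last slot, so the two members of a cancelling pair are not literally the same bracket and one must also use the skewsymmetry-in-the-first-arguments to align them. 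To count as a complete proof you would need to carry this computation out; as it stands it is a correct and well-organized plan whose hardest step is asserted rather than verified.
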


\begin{proof}
The fact that $\alpha$ and $\beta$ commute comes from the given algebra. Lemma \ref{morphinduced} provides that $\alpha$ and $\beta$ are morphisms for the bracket $\bracket{\cdot,...,\cdot}_\tau$. We show that this bracket satisfies the BiHom-skewsymmetry and the $(n+1)$-BiHom-Jacobi (resp. BiHom Nambu) identity.
\begin{align*}
&\bracket{\beta(x_1),...,\beta(x_{j+1}),\beta(x_j),...,\beta(x_n),\alpha(x_{n+1})}_\tau\\ &= \sum_{i=1}^{j-1} (-1)^{i+1}\tau(\beta(x_i)) \bracket{\beta(x_1),...,\widehat{\beta(x_i)},...,\beta(x_{j+1}),\beta(x_j),...,\beta(x_n),\alpha(x_{n+1})} \\
&+ (-1)^{j-1} \tau(\beta(x_{j+1})) \bracket{\beta(x_1),...,\beta(x_{j-1}),\beta(x_j),\beta(x_{j+2}),...,\beta(x_n),\alpha(x_{n+1})}\\
&+ (-1)^{j} \tau(\beta(x_{j})) \bracket{\beta(x_1),...,\beta(x_{j-1}),\beta(x_{j+1}),\beta(x_{j+2}),...,\beta(x_n),\alpha(x_{n+1})}\\
&+ \sum_{i=j+2}^{n} (-1)^{i+1}\tau(\beta(x_i)) \bracket{\beta(x_1),...,\beta(x_{j+1}),\beta\para{x_j},...,\widehat{\beta(x_i)},...,\beta(x_n),\alpha(x_{n+1})} \\
&+ (-1)^n \tau(\alpha(x_{n+1})) \bracket{\beta(x_1),...,\beta(x_{j+1}),\beta(x_j),...,\beta(x_n)}\\
&= - \sum_{i=1}^{j-1} (-1)^{i+1}\tau(\beta(x_i)) \bracket{\beta(x_1),...,\widehat{\beta(x_i)},...,\beta(x_j),\beta(x_{j+1}),...,\beta(x_n),\alpha(x_{n+1})} \\
&- (-1)^{j-1} \tau(\beta(x_{j})) \bracket{\beta(x_1),...,\beta(x_{j-1}),\beta(x_{j+1}),\beta(x_{j+2}),...,\beta(x_n),\alpha(x_{n+1})}\\
&- (-1)^{j} \tau(\beta(x_{j+1})) \bracket{\beta(x_1),...,\beta(x_{j-1}),\beta(x_j),\beta(x_{j+2}),...,\beta(x_n),\alpha(x_{n+1})}\\
&- \sum_{i=j+2}^{n} (-1)^{i+1}\tau(\beta(x_i)) \bracket{\beta(x_1),...,\beta(x_j),\beta(x_{j+1}),...,\widehat{\beta(x_i)},...,\beta(x_n),\alpha(x_{n+1})} \\
&+ (-1)^n \tau(\beta(x_{n+1})) \bracket{\beta(x_1),...,\beta(x_{j+1}),\beta(x_j),...,\alpha(x_n)}\\
&= - \sum_{i=1}^n (-1)^{i-1} \tau(\beta(x_i)) \bracket{\beta(x_1),...,\widehat{\beta(x_i)},...,\beta(x_n),\alpha(x_{n+1})}\\
& - (-1)^n \tau(\beta(x_{n+1})) \bracket{\beta(x_1),...,,\beta(x_{n-1})\alpha(x_n)}\\
&= - \sum_{i=1}^n (-1)^{i-1} \tau(\beta(x_i)) \bracket{\beta(x_1),...,\widehat{\beta(x_i)},...,\beta(x_n),\alpha(x_{n+1})}\\
& - (-1)^n \tau(\alpha(x_{n+1})) \bracket{\beta(x_1),...,,\beta(x_{n-1})\beta(x_n)}\\
&= -\bracket{\beta(x_1),...,\beta(x_n),\alpha(x_{n+1})}_\tau.
\end{align*}

\begin{align*}
&\bracket{\beta(x_1),...,\beta(x_{n+1}),\alpha(x_{n})}_\tau \\
&=\sum_{i=1}^{n-1}(-1)^{i-1}\tau(\beta(x_i)) \bracket{\beta(x_1),...,\widehat{\beta(x_i)},...,\beta(x_{n+1}),\alpha(x_n)}\\
&+ (-1)^{n-1} \tau(\beta(x_{n+1})) \bracket{\beta(x_1),...,\beta(x_{n-1}),\alpha(x_n)}\\
&+ (-1)^n \tau(\alpha(x_n)) \bracket{\beta(x_1),...,\beta(x_{n-1}),\beta(x_{n+1})}\\
&= - \sum_{i=1}^{n-1}(-1)^{i-1}\tau(\beta(x_i)) \bracket{\beta(x_1),...,\widehat{\beta(x_i)},...,\beta(x_{n}),\alpha(x_{n+1})}\\
& -  (-1)^{n} \tau(\beta(x_{n+1}) \bracket{\beta(x_1),...,\beta(x_{n-1}),\alpha(x_n)}\\
&- (-1)^{n-1} \tau(\alpha(x_n)) \bracket{\beta(x_1),...,\beta(x_{n-1}),\beta(x_{n+1})}\\
&= - \sum_{i=1}^{n-1}(-1)^{i-1}\tau(\beta(x_i)) \bracket{\beta(x_1),...,\widehat{\beta(x_i)},...,\beta(x_{n}),\alpha(x_{n+1})}\\
& -  (-1)^{n} \tau(\alpha(x_{n+1}) \bracket{\beta(x_1),...,\beta(x_{n-1}),\beta(x_n)}\\
&- (-1)^{n-1} \tau(\beta(x_n)) \bracket{\beta(x_1),...,\beta(x_{n-1}),\alpha(x_{n+1})}\\
&= - \bracket{\beta(x_1),...,\beta(x_n),\alpha(x_{n+1})}_\tau.
\end{align*}

We denote by $L_1$ and $R_1$ respectively the left and the right hand sides of the $(n+1)$-BiHom-Jacobi identity, we have, for all $x_1,...,x_n,y_1,...,y_{n+1} \in A$:\\

\begin{align*}
L_1 &= \bracket{\beta^2(x_1),...,\beta^2(x_n),\bracket{\beta(y_1),...,\beta(y_n),\alpha(y_{n+1})}_\tau }_\tau\\
&= \sum_{i=1}^n (-1)^{i-1}\tau(\beta^2(x_i))\bracket{\beta^2(x_1),...,\widehat{\beta^2(x_i)},...,\beta^2(x_n),\bracket{\beta(y_1),...,\beta(y_n),\alpha(y_{n+1})}_\tau }\\
&+ (-1)^n \tau\para{\bracket{\beta(y_1),...,\beta(y_n),\alpha(y_{n+1})}_\tau} \bracket{\beta^2(x_1),...,\beta^2(x_n)}\\
&= \sum_{i=1}^n\sum_{j=1}^n (-1)^{i+j}\tau(\beta^2(x_i))\tau(\beta(y_j)) \times \\
& \qquad \times \bracket{\beta^2(x_1),...,\widehat{\beta^2(x_i)},...,\beta^2(x_n),\bracket{\beta(y_1),...,\widehat{\beta(y_j)},...,\beta(y_n),\alpha(y_{n+1})} }\\
&+ \sum_{i=1}^n (-1)^{i+n-1}\tau(\beta^2(x_i))\tau(\alpha(y_{n+1})) \bracket{\beta^2(x_1),...,\widehat{\beta^2(x_i)},...,\beta^2(x_n),\bracket{\beta(y_1),...,\beta(y_n)} }\\
&= \sum_{i=1}^n\sum_{j=1}^n (-1)^{i+j}\tau(\beta^2(x_i))\tau(\beta(y_j)) \times \\
& \qquad \times \bracket{\beta^2(x_1),...,\widehat{\beta^2(x_i)},...,\beta^2(x_n),\bracket{\beta(y_1),...,\widehat{\beta(y_j)},...,\beta(y_n),\alpha(y_{n+1})} }\\
&+ \sum_{i=1}^n (-1)^{i+n-1}\tau(\beta^2(x_i))\tau(\beta(y_{n+1})) \times \\
& \qquad \times \bracket{\beta^2(x_1),...,\widehat{\beta^2(x_i)},...,\beta^2(x_n),\bracket{\beta(y_1),...,\beta(y{n-1}),\alpha(y_n)} }\\
&= \sum_{i=1}^n\sum_{j=1}^n (-1)^{i+j}\tau(\beta^2(x_i))\tau(\beta(y_j)) \sum_{k=1}^{j-1} (-1)^{n-k} \times \\
& \qquad \times \bracket{\beta^2(y_1),...,\widehat{\beta^2(y_k)},...,\widehat{\beta^2(y_j)},...,\beta^2(y_{n+1}),\bracket{\beta(x_1),...,\widehat{\beta(x_i)},...,\beta(x_n),\alpha(y_{k})} }\\
&+ \sum_{i=1}^n\sum_{j=1}^n (-1)^{i+j}\tau(\beta^2(x_i))\tau(\beta(y_j)) \sum_{k=j+1}^{n+1} (-1)^{n-k-1}\times \\
& \qquad \times  \bracket{\beta^2(y_1),...,\widehat{\beta^2(y_k)},...,\widehat{\beta^2(y_j)},...,\beta^2(y_{n+1}),\bracket{\beta(x_1),...,\widehat{\beta(x_i)},...,\beta(x_n),\alpha(y_{k})} }\\
& + \sum_{i=1}^n (-1)^{i+n-1}\tau(\beta^2(x_i))\tau(\beta(y_{n+1})) \times \\
&\qquad \times \sum_{k=1}^n (-1)^{n-k} \bracket{\beta^2(y_1),...,\widehat{\beta^2(y_k)},...,\beta^2(y_n),\bracket{\beta(x_1),...,\widehat{\beta(x_i)},...,\beta(x_n), \alpha(y_k)} }\\
&= \sum_{i=1}^n\sum_{j=1}^{n+1} \sum_{k=1}^{j-1}(-1)^{i+j}(-1)^{n-k}\tau(\beta^2(x_i))\tau(\beta(y_j))\times \\
& \qquad \times    \bracket{\beta^2(y_1),...,\widehat{\beta^2(y_k)},...,\widehat{\beta^2(y_j)},...,\beta^2(y_{n+1}),\bracket{\beta(x_1),...,\widehat{\beta(x_i)},...,\beta(x_n),\alpha(y_{k})} }\\
&+ \sum_{i=1}^n\sum_{j=1}^{n+1} \sum_{k=j+1}^{n+1} (-1)^{i+j}(-1)^{n-k-1} \tau(\beta^2(x_i))\tau(\beta(y_j))\times \\
& \qquad \times    \bracket{\beta^2(y_1),...,\widehat{\beta^2(y_k)},...,\widehat{\beta^2(y_j)},...,\beta^2(y_{n+1}),\bracket{\beta(x_1),...,\widehat{\beta(x_i)},...,\beta(x_n),\alpha(y_{k})} }\\
\end{align*}

For the right hand side, we have:
\begin{align*}
R_1 &= \sum_{k=1}^{n+1} (-1)^{n+1-k} \bracket{\beta^2(y_1),...,\widehat{\beta^2(y_k)},...,\beta^2(y_{n+1}),\bracket{\beta(x_1),...,\beta(x_n),\alpha(y_k)}_\tau}_\tau \\
&= \sum_{k=1}^{n+1} (-1)^{n+1-k} \sum_{j=1}^{k-1} (-1)^{j-1} \tau(\beta^2(y_j))\times\\
&\qquad \times \bracket{\beta^2(y_1),...,\widehat{\beta^2(y_j)},...,\widehat{\beta^2(y_k)},...,\beta^2(y_{n+1}),\bracket{\beta(x_1),...,\beta(x_n),\alpha(y_k)}_\tau}\\
&+ \sum_{k=1}^{n+1} (-1)^{n+1-k} \sum_{j=k+1}^{n+1} (-1)^{j} \tau(\beta^2(y_j))\times \\
 &\qquad \times \bracket{\beta^2(y_1),...,\widehat{\beta^2(y_k)},...,\widehat{\beta^2(y_j)},...,\beta^2(y_{n+1}),\bracket{\beta(x_1),...,\beta(x_n),\alpha(y_k)}_\tau}\\
&+ \sum_{k=1}^{n+1} (-1)^{n+1-k}  (-1)^{n} \tau(\bracket{\beta(x_1),...,\beta(x_n),\alpha(y_k)}_\tau)\times \\
& \qquad \times  \bracket{\beta^2(y_1),...,\widehat{\beta^2(y_k)},...,,...,\beta^2(y_{n+1})}\\
&= \sum_{k=1}^{n+1} (-1)^{n+1-k} \sum_{j=1}^{k-1} \sum_{i=1}^n (-1)^{i-1} (-1)^{j-1} \tau(\beta^2(y_j)) \tau (\beta(x_i)) \times\\
&\qquad \times \bracket{\beta^2(y_1),...,\widehat{\beta^2(y_j)},...,\widehat{\beta^2(y_k)},...,\beta^2(y_{n+1}),\bracket{\beta(x_1),...,\widehat{\beta(x_i)},...,\beta(x_n),\alpha(y_k)} }\\
&+ \sum_{k=1}^{n+1} (-1)^{n+1-k} \sum_{j=k+1}^{n+1} \sum_{i=1}^n (-1)^{i-1}  (-1)^{j} \tau(\beta^2(y_j)) \tau(\beta(x_i)) \times \\
 &\qquad \times \bracket{\beta^2(y_1),...,\widehat{\beta^2(y_k)},...,\widehat{\beta^2(y_j)},...,\beta^2(y_{n+1}),\bracket{\beta(x_1),...,\widehat{\beta(x_i)},...,\beta(x_n),\alpha(y_k)} }\\
&+\sum_{k=1}^{n+1} (-1)^{n+1-k} \sum_{j=1}^{k-1} (-1)^{j-1} (-1)^n \tau(\alpha(y_k)) \tau(\beta^2(y_j))\times\\
&\qquad \times \bracket{\beta^2(y_1),...,\widehat{\beta^2(y_j)},...,\widehat{\beta^2(y_k)},...,\beta^2(y_{n+1}),\bracket{\beta(x_1),...,\beta(x_n)} }\\
&+ \sum_{k=1}^{n+1} (-1)^{n+1-k} \sum_{j=k+1}^{n+1} (-1)^{j} (-1)^n \tau(\alpha(y_k)) \tau(\beta^2(y_j))\times \\
 &\qquad \times \bracket{\beta^2(y_1),...,\widehat{\beta^2(y_k)},...,\widehat{\beta^2(y_j)},...,\beta^2(y_{n+1}),\bracket{\beta(x_1),...,\beta(x_n)} }\\
&=\sum_{i=1}^n \sum_{k=1}^{n+1}\sum_{j=1}^{k-1}  (-1)^{n+1-k}  (-1)^{i-1} (-1)^{j-1} \tau(\beta^2(y_j)) \tau (\beta(x_i)) \times\\
&\qquad \times \bracket{\beta^2(y_1),...,\widehat{\beta^2(y_j)},...,\widehat{\beta^2(y_k)},...,\beta^2(y_{n+1}),\bracket{\beta(x_1),...,\widehat{\beta(x_i)},...,\beta(x_n),\alpha(y_k)} }\\
&+ \sum_{i=1}^n \sum_{k=1}^{n+1} \sum_{j=k+1}^{n+1}  (-1)^{n+1-k} (-1)^{i-1}  (-1)^{j} \tau(\beta^2(y_j)) \tau(\beta(x_i)) \times \\
 &\qquad \times \bracket{\beta^2(y_1),...,\widehat{\beta^2(y_k)},...,\widehat{\beta^2(y_j)},...,\beta^2(y_{n+1}),\bracket{\beta(x_1),...,\widehat{\beta(x_i)},...,\beta(x_n),\alpha(y_k)} }\\
&+\sum_{k=1}^{n+1}  \sum_{j=1}^{k-1} (-1)^{n+1-k}(-1)^{j-1} (-1)^n \tau(\alpha(y_k)) \tau(\beta^2(y_j))\times\\
&\qquad \times \bracket{\beta^2(y_1),...,\widehat{\beta^2(y_j)},...,\widehat{\beta^2(y_k)},...,\beta^2(y_{n+1}),\bracket{\beta(x_1),...,\beta(x_n)} }\\
&+ \sum_{j=1}^{n+1}  \sum_{k=1}^{j-1} (-1)^{n+1-k} (-1)^{j} (-1)^n \tau(\alpha(y_k)) \tau(\beta^2(y_j))\times \\
 &\qquad \times \bracket{\beta^2(y_1),...,\widehat{\beta^2(y_k)},...,\widehat{\beta^2(y_j)},...,\beta^2(y_{n+1}),\bracket{\beta(x_1),...,\beta(x_n)} }\\
&=\sum_{i=1}^n \sum_{j=1}^{n+1}\sum_{k=j+1}^{n+1}  (-1)^{n+1-k}  (-1)^{i-1} (-1)^{j-1} \tau(\beta^2(y_j)) \tau (\beta(x_i)) \times\\
&\qquad \times \bracket{\beta^2(y_1),...,\widehat{\beta^2(y_j)},...,\widehat{\beta^2(y_k)},...,\beta^2(y_{n+1}),\bracket{\beta(x_1),...,\widehat{\beta(x_i)},...,\beta(x_n),\alpha(y_k)} }\\
&+ \sum_{i=1}^n \sum_{j=1}^{n+1} \sum_{k=1}^{j-1}  (-1)^{n+1-k} (-1)^{i-1}  (-1)^{j} \tau(\beta^2(y_j)) \tau(\beta(x_i)) \times \\
 &\qquad \times \bracket{\beta^2(y_1),...,\widehat{\beta^2(y_k)},...,\widehat{\beta^2(y_j)},...,\beta^2(y_{n+1}),\bracket{\beta(x_1),...,\widehat{\beta(x_i)},...,\beta(x_n),\alpha(y_k)} }\\
&= L_1.
\end{align*} 

The same way, for the  BiHom-Nambu identity, we have, for all $x_1,...,x_n,y_1,...,y_{n+1} \in A$:
\begin{align*}
L_2 &= \bracket{\beta^2(x_1),...,\beta^2(x_n),\bracket{\beta(y_1),...,\beta(y_n),\alpha(y_{n+1})}_\tau }_\tau\\
&= \sum_{i=1}^n (-1)^{i-1}\tau(\beta^2(x_i))\bracket{\beta^2(x_1),...,\widehat{\beta^2(x_i)},...,\beta^2(x_n),\bracket{\beta(y_1),...,\beta(y_n),\alpha(y_{n+1})}_\tau }\\
&+ (-1)^n \tau\para{\bracket{\beta(y_1),...,\beta(y_n),\alpha(y_{n+1})}_\tau} \bracket{\beta^2(x_1),...,\beta^2(x_n)}\\
&= \sum_{i=1}^n\sum_{j=1}^n (-1)^{i+j}\tau(\beta^2(x_i))\tau(\beta(y_j) \times \\
& \qquad \times\bracket{\beta^2(x_1),...,\widehat{\beta^2(x_i)},...,\beta^2(x_n),\bracket{\beta(y_1),...,\widehat{\beta(y_j)},...,\beta(y_n),\alpha(y_{n+1})} }\\
&+ \sum_{i=1}^n (-1)^{i+n-1}\tau(\beta^2(x_i))\tau(\alpha(y_{n+1})) \bracket{\beta^2(x_1),...,\widehat{\beta^2(x_i)},...,\beta^2(x_n),\bracket{\beta(y_1),...,\beta(y_n)} }\\
&= \sum_{i=1}^n\sum_{j=1}^n (-1)^{i+j}\tau(\beta^2(x_i))\tau(\beta(y_j))\times \\
& \qquad \times \bracket{\beta^2(x_1),...,\widehat{\beta^2(x_i)},...,\beta^2(x_n),\bracket{\beta(y_1),...,\widehat{\beta(y_j)},...,\beta(y_n),\alpha(y_{n+1})} }\\
&+ \sum_{i=1}^n (-1)^{i+n-1}\tau(\beta^2(x_i))\tau(\beta(y_{n+1}))\times \\
& \qquad \times  \bracket{\beta^2(x_1),...,\widehat{\beta^2(x_i)},...,\beta^2(x_n),\bracket{\beta(y_1),...,\beta(y{n-1}),\alpha(y_n)} }\\
&= \sum_{i=1}^n \sum_{j=1}^n (-1)^{i+j}\tau(\beta^2(x_i))\tau(\beta(y_j))\times \\
&\qquad \times \sum_{k=1;k\neq j}^{b+1}\bracket{\beta^2(y_1),...,\widehat{\beta^2(y_j)},...,\bracket{\beta(x_1),...,\widehat{\beta(x_i)},...,\beta(x_n),\alpha(y_k)},...,\beta^2(y_{n+1}) }\\
&+ \sum_{i=1}^n (-1)^{i+n-1}\tau(\beta^2(x_i))\tau(\beta(y_{n+1}))\times \\
&\qquad \times \sum_{k=1}^n \bracket{\beta^2(y_1),...,\bracket{\beta(x_1),...,\widehat{\beta(x_i)},...,\beta(x_{n}),\alpha(y_k)},...,\beta^2(y_n) }\\
&= \sum_{i=1}^n \sum_{j=1}^{n+1} (-1)^{i+j}\tau(\beta^2(x_i))\tau(\beta(y_j))\times \\
&\qquad \times \sum_{k=1;k\neq j}^{b+1}\bracket{\beta^2(y_1),...,\widehat{\beta^2(y_j)},...,\bracket{\beta(x_1),...,\widehat{\beta(x_i)},...,\beta(x_n),\alpha(y_k)},...,\beta^2(y_{n+1}) }\\
\end{align*}

For the right hand side, we have:
\begin{align*}
R_2 &= \sum_{k=1}^{n+1} \bracket{\beta^2(y_1),...,\bracket{\beta(x_1),...,\beta(x_n),\alpha(y_k)}_\tau,...,\beta^2(y_{n+1}) }_\tau \\
&= \sum_{k=1}^{n+1} \sum_{j=1;j\neq k}^{n+1} (-1)^{j-1} \tau\para{\beta^2(y_j)} \times \\
& \qquad \times \bracket{\beta^2(y_1),...,\widehat{\beta^2(y_j)},...,\bracket{\beta(x_1),...,\beta(x_n),\alpha(y_k)}_\tau,...,\beta^2(y_{n+1}) }\\
&= \sum_{k=1}^{n+1} \sum_{j=1;j\neq k}^{n+1} (-1)^{j-1} \tau\para{\beta^2(y_j)} \sum_{i=1}^n (-1)^{i-1}\tau\para{\beta(x_i)}\times\\
&\qquad \times \bracket{\beta^2(y_1),...,\widehat{\beta^2(y_j)},...,\bracket{\beta(x_1),...,\widehat{\beta(x_i)},...,\beta(x_n),\alpha(y_k)},...,\beta^2(y_{n+1}) }\\
&+ \sum_{k=1}^{n+1} \sum_{j=1;j\neq k}^{n+1} (-1)^{j+n-1} \tau\para{\beta^2(y_j)} \tau(\alpha(y_k))\times \\
&\qquad \times \bracket{\beta^2(y_1),...,\widehat{\beta^2(y_j)},...,\beta^2(y_{k-1}),\bracket{\beta(x_1),...,\beta(x_n)},...,\beta^2(y_{n+1}) }\\
&=\sum_{i=1}^n  \sum_{j=1}^{n+1}\sum_{k=1;k\neq j}^{n+1} (-1)^{i+j} \tau\para{\beta(x_i)}\tau\para{\beta^2(y_j)}\times\\
&\qquad \times \bracket{\beta^2(y_1),...,\widehat{\beta^2(y_j)},...,\bracket{\beta(x_1),...,\widehat{\beta(x_i)},...,\beta(x_n),\alpha(y_k)},...,\beta^2(y_{n+1}) }\\
&+ \sum_{k=1}^{n+1} \sum_{j=1;j\neq k}^{n+1} (-1)^{j+n-1} \tau\para{\beta^2(y_j)} \tau(\beta(y_k))\times\\
&\qquad \times \bracket{\beta^2(y_1),...,\widehat{\beta^2(y_j)},...,\beta^2(y_{k-1}),\bracket{\beta(x_1),...,\beta(x_n)},...,\alpha\circ\beta(y_{n+1}) }\\
&= L_2 \\
&+ \sum_{k=1}^{n+1} \sum_{j=1}^{k-1} (-1)^{j+n-1}\tau\para{\beta^2(y_j)} \tau(\beta(y_k))\times \\
&\qquad \times \bracket{\beta^2(y_1),...,\widehat{\beta^2(y_j)},...,\beta^2(y_{k-1}),\bracket{\beta(x_1),...,\beta(x_n)},...,\alpha\circ\beta(y_{n+1}) }\\
&+ \sum_{k=1}^{n+1} \sum_{j=k+1}^{n+1} (-1)^{j+n-1} \tau\para{\beta^2(y_j)} \tau(\beta(y_k)) \times \\
&\qquad \times \bracket{\beta^2(y_1),...,\beta^2(y_{k-1}),\bracket{\beta(x_1),...,\beta(x_n)},...,\widehat{\beta^2(y_j)},...,\alpha\circ\beta(y_{n+1}) }\\
&= L_2 \\
&+ \sum_{k=1}^{n+1} \sum_{j=1}^{k-1} (-1)^{j+n-1} \tau\para{\beta^2(y_j)} \tau(\beta(y_k)) \times \\
&\qquad \times \bracket{\beta^2(y_1),...,\widehat{\beta^2(y_j)},...,\beta^2(y_{k-1}),\bracket{\beta(x_1),...,\beta(x_n)},...,\alpha\circ\beta(y_{n+1}) }\\
&+ \sum_{k=1}^{n+1} \sum_{j=k+1}^{n+1} (-1)^{j+n-1} \tau\para{\beta^2(y_j)} \tau(\beta(y_k)) \times \\
&\qquad \times \bracket{\beta^2(y_1),...,\beta^2(y_{k-1}),\bracket{\beta(x_1),...,\beta(x_n)},...,\widehat{\beta^2(y_j)},...,\alpha\circ\beta(y_{n+1}) }\\
\end{align*}

\begin{remark}\label{weakver}
If one considers a weaker version of $n$-BiHom-Lie algebras, namely considering the maps $\alpha$ and $\beta$ to be any linear maps instead of being algebra morphisms, then one can replace the conditions
\[ \tau\circ \alpha = \tau \text{ and } \tau\circ \beta = \tau \]
 by the condition
 \[ \tau(\beta(x))\tau(\beta^2(y)) = \tau(\beta^2(x))\tau(\beta(y)). \]

\end{remark}
\end{proof}

We study now the conditions on $\alpha$ and $\beta$ in the Theorem \ref{inducedbihom}, to see how restrictive they are for the choice of an $n$-BiHom-Lie algebra to use for the construction.

We consider an $n$-BiHom-Lie algebra $\para{A,\bracket{\cdot,...,\cdot},\alpha,\beta}$ and $\tau : A \to \K$ a twisted trace such that none of $\bracket{\cdot,...,\cdot}$, $\tau$, $\alpha$ and $\beta$ is identically zero.  Suppose that they satisfy the conditions of Theorem \ref{inducedbihom}, then we get the following:
\[ \alpha(\ker(\tau)) \subseteq \ker(\tau) \text{ and } \beta(\ker(\tau)) \subseteq \ker(\tau). \]
The two conditions of Theorem \ref{inducedbihom} put together also lead to the following:

Let $x\in A$ such that $\tau(x)\neq 0$, for all $y\in A$, we have:
\begin{align*}
\tau(\alpha(x))\beta(y) = \tau(\beta(x))\alpha(y) & \implies \tau(x)\beta(y) = \tau(x)\alpha(y) \\
& \implies \beta(y) = \alpha(y). \qquad \text{ (since $\tau(x)\neq 0$)}
\end{align*}
If one drops some conditions as explained in Remark \ref{weakver}, it is possible to get a less restrictive situation.

\begin{example}\label{ex}
Let $A$ be a vector space, $\dim A = n=3$ with basis $(e_i)_{1\leq i \leq n}$ and the linear maps $\bracket{\cdot,\cdot} : A \otimes A \to A$ and $\alpha,\beta : A \to A$ given by:
\[\bracket{e_i,e_j}=\sum_{k=1}^n c_{ij}^k e_k, \forall i,j : 1\leq i,j \leq n,\]
and
\[ [\alpha]=(a_{i,j})_{1\leq i,j \leq n} ; [\beta]=(b_{i,j})_{1\leq i,j \leq n}. \]
We suppose $[\alpha]$ and $[\beta]$ to be diagonal, which is a sufficient condition for $\alpha \circ \beta = \beta \circ \alpha$. The remaining conditions for $(A,\bracket{\cdot,\cdot},\alpha,\beta)$ to be a BiHom-Lie-Leibniz  algebra are given, under the assumptions above, by:
\begin{itemize}
\item $\alpha,\beta$-skewsymmetry:
\[ \sum_{k=1}^n \sum_{l=1}^3 (a_{ki}b_{lj}+a_{kj}b_{li})c_{lk}^p = 0, \forall i,j,p : 1\leq i,j,p \leq n. \]
\item Multiplicativity
\[  \sum_{k=1}^n c_{ij}^k a_{k,q} - \sum_{k=1}^n \sum_{l=1}^n a_{k,i}a_{lj}c_{kl}^q = 0, \forall i,j,q : 1 \leq i,j,q \leq n.  \]
\item BiHom-Leibniz identity:
\begin{align*}
\sum_{l=1}^n \sum_{p=1}^n \sum_{q=1}^n \sum_{r=1}^n \sum_{s=1}^n (b_{pl}b_{li}b_{qj}a_{rk} c_{qr}^s c_{ps}^t  - b_{pl}b_{lk}b_{qi}a_{rj} c_{qr}^s c_{sp}^t - b_{pl}b_{lj}b_{qi}a_{rk} c_{qr}^s c_{ps}^t)=0,& \\
 \forall i,j,k,t : 1 \leq i,j,k,t \leq n. &
 \end{align*}
\end{itemize}
We solve these equations, starting with $\alpha,\beta$-skewsymmetry, then BiHom-Leibniz identity and multiplicativity, while choosing at each step solutions where none of $\bracket{\cdot,\cdot}$, $\alpha$ and $\beta$ are zero. This way, we can find examples of BiHom-Lie-Leibniz  algebras.

We consider now a linear map $\tau : A \to \K$, given by:
\[ \tau(e_i)=t_i, \forall i : 1\leq i \leq n. \]
The conditions of Theorem \ref{inducedbihom} become:
\[ \sum_{k=1}^n \sum_{l=1}^n \sum_{o=1}^n b_{ki} a_{lj} c_{cl}^o t_o = 0, \forall i,j : 1\leq i,j \leq 3. \]
\[ t_i-\sum_{k=1}^n a_{ki}t_k = 0, \forall i : 1\leq i \leq n. \]
\[ t_i-\sum_{k=1}^n a_{ki}t_k = 0, \forall i : 1\leq i \leq n. \]
\[ \sum_{k=1}^n (a_{ki}b_{lj}t_k - b_{ki}a_{lj}t_k)=0, \forall i,j,l : 1\leq i,j,l \leq n. \]
Solving these equations gives conditions on $\tau$, $\alpha$, $\beta$ (and sometimes on $\bracket{\cdot,\cdot}$) such that one can construct the induced Leibniz 3-BiHom-Lie algebra. We give now some examples obtained using this procedure:
\begin{enumerate}
\item The bracket and the structure maps are given by: 
\[ [e_1, e_3]=c_1 e_3 ; [e_2,e_3]=c_2 e_3 ; [e_3,e_1]=c_3 e_3 ; [e_3,e_2]=c_4 e_3 ; [e_3,e_3]=c_5 e_3. \]
\[ [\alpha]=\begin{pmatrix} a_1 & 0 & 0\\ 0 & a_2 & 0 \\ 0 & 0 & 0 \end{pmatrix} ; [\beta]=\begin{pmatrix} b_1 & 0 & 0\\ 0 & b_2 & 0 \\ 0 & 0 & 0 \end{pmatrix}. \]
One solution to have the conditions of Theorem \ref{inducedbihom} is given by:
\[ t_2=t_3=0 ; a_1=b_1=1 ; a_2=b_2. \]
We get a Leibniz $3$-BiHom-Lie algebra defined by:
\begin{align*}
[e_1,e_2,e_3]&=t_1 c_2 e_3 ;\ [e_1,e_3,e_1]=(t_1 c_1+t_1c_3) e_3 ;\ [e_1,e_3,e_2]=t_1 c_4 e_3 ;\ [e_1,e_3,e_3]=t_1 c_5 e_3 ;\\
[e_2,e_1,e_3]&=-t_1 c_2 e_3 ;\ [e_2,e_3,e_1]=t_1 c_2 e_3 ;\ [e_3,e_1,e_2]=-t_1 c_4 e_3 ;\ [e_3,e_1,e_3]=-t_1 c_5 e_3 ;\\
[e_3,e_2,e_1]&=t_1 c_4 e_3 ;\ [e_3,e_3,e_3]=t_1 c_5 e_3.
\end{align*}
\[ [\alpha]=\begin{pmatrix} 1 & 0 & 0\\ 0 & a_2 & 0 \\ 0 & 0 & 0 \end{pmatrix} ; [\beta]=\begin{pmatrix} 1 & 0 & 0\\ 0 & a_2 & 0 \\ 0 & 0 & 0 \end{pmatrix}. \]
\item The bracket and the structure maps are given by: 
\[ [e_1, e_2]=c_1 e_2+c_2 e_3 ; [e_1,e_3]=c_3 e_3 ; [e_2,e_2]=c_4 e_2+c_5 e_3 ; [e_3,e_2]=c_6 e_3. \]
\[ [\alpha]=\begin{pmatrix} a_1 & 0 & 0\\ 0 & 0 & 0 \\ 0 & 0 & 0 \end{pmatrix} ; [\beta]=\begin{pmatrix} 1 & 0 & 0\\ 0 & 1 & 0 \\ 0 & 0 & 1 \end{pmatrix}. \]
In this case, there is no (nonzero) $\tau$ satisfying the conditions of Theorem \ref{inducedbihom}.
\item The bracket and the structure maps are given by: 
\[ [e_1, e_3]=c_1 e_3 ; [e_3,e_1]=-c_1 e_3. \]
\[ [\alpha]=\begin{pmatrix} 1 & 0 & 0\\ 0 & a_1 & 0 \\ 0 & 0 & a_2 \end{pmatrix} ; [\beta]=\begin{pmatrix} 1 & 0 & 0\\ 0 & b_1 & 0 \\ 0 & 0 & a_2 \end{pmatrix}. \]
In this case, when solving equations for $\tau$, $\alpha$ and $\beta$ to satisfy the conditions of Theorem \ref{inducedbihom}, we get either $\tau=0$ or $\alpha=\beta$. In the second case, the chosen algebra becomes a Hom-Lie algebra (see \cite{Bihom1}) if $a_1,a_2$ are non-zero. Namely, all possible solutions (where $\tau\neq 0$) are:
\[a_1 = 1 ; b_1 = 1 ; t_3 = 0.\]
\[ b_1=a_1 ; t_2=0 ; t_3=0. \]
The second solution leads to the induced algebra's bracket being zero. Let us look at the first one. The structure maps, under these conditions become:
\[ [\alpha]=\begin{pmatrix} 1 & 0 & 0\\ 0 & a_1 & 0 \\ 0 & 0 & a_2 \end{pmatrix} ; [\beta]=\begin{pmatrix} 1 & 0 & 0\\ 0 & b_1 & 0 \\ 0 & 0 & a_2 \end{pmatrix}, \]
and the bracket of the induced algebra is skewsymmetric and is given by:
\[ [e_1,e_2,e_3]=t_2 c_1 e_3. \]
\end{enumerate}
\end{example}

\begin{example}
Now let us look at a case where we drop the condition that $\alpha$ and $\beta$ need to be morphisms of the induced algebra, as in Remark \ref{weakver}. The equations $\tau \circ \alpha = \tau$ and $\tau \circ \beta = \tau$ will be replaced by the following: \[\forall x,y \in A, \tau (\beta^2(x))\tau(\beta(y))=\tau (\beta(x))\tau(\beta^2(y)). \]
In terms of structure constants, it takes the following form:
\[\sum_{k=1}^{n}\sum_{o=1}^{n}\sum_{l=1}^{n}\left( \left( b_{kl}b_{li}b_{oj}-b_{ki}b_{ol}b_{lj}\right) t_{k}t_{o}\right) =0.\]
We consider the BiHom-Lie-Leibniz algebra, generated in the same way as above, given by:
\[ [e_2, e_1]=c_1 e_2 ; [e_2,e_2]=c_2 e_2 ; [e_2,e_3]=c_3 e_2. \]
\[ [\alpha]=\begin{pmatrix} a_1 & 0 & 0\\ 0 & 0 & 0 \\ 0 & 0 & a_2 \end{pmatrix} ; [\beta]=\begin{pmatrix} b_1 & 0 & 0\\ 0 & 0 & 0 \\ 0 & 0 & b_2 \end{pmatrix}. \]
One solution to have the conditions of Theorem \ref{inducedbihom} and Remark \ref{weakver} is given by:
\[ t_1=t_3=0. \]
And we get the following induced $3$-BiHom-Lie-Leibniz algebra (without the multiplicativity property for $\alpha$ and $\beta$) defined by the bracket is given by:
\[ [e_2,e_1,e_2]=t_2 c_1 e_2;  [e_2,e_2,e_2]=t_2 c_2 e_2;  [e_2,e_3,e_2]=t_2 c_3 e_2, \]
together with the same linear maps $\alpha$ and $\beta$.
\end{example}


\begin{small}

\section*{Acknowledgement}
A. Kitouni is grateful to Division of Applied Mathematics, the research environment Mathematics and Applied Mathematics (MAM) at the School of Education, Culture and Communication at
 M{\"a}lardalen University,
V{\"a}ster{\aa}s,  Sweden for providing support and excellent research environment during his visits to M{\"a}lardalen University when part of the work on this paper has been performed.

\end{small}

\end{document}